\newlength\FHoffset
\newlength\FHleft
\newlength\FHright
\renewcommand{\headrulewidth}{1.0pt} 
\newbox\FHline
\title{\textbf{Nonuniqueness of trajectories on a set of full measure for Sobolev vector fields}}
\author[]{\large \textbf{Anuj Kumar}\footnote{
Current affiliation: Department of Mathematics, University of California Berkeley, CA 94720. 

\hspace{0.1cm} \textit{Email:}  \href{mailto:anujkumar@berkeley.edu}{anujkumar@berkeley.edu}.

\vspace{0.2cm}

\hspace{0.1cm} Previous affiliation: Department of Applied Mathematics, University of California Santa Cruz, CA 95064. 

\hspace{0.1cm} \textit{Email:}  \href{mailto:akumar43@ucsc.edu}{akumar43@ucsc.edu}. }}
\date{}
\newtheoremstyle{mystyle}
  {}
  {}
  {\itshape}
  {}
  {\bfseries}
  {.}
  { }
  {\thmname{#1}\thmnumber{ #2}\thmnote{ (#3)}}
\theoremstyle{mystyle}
\newtheorem{theorem}{Theorem}[section]
\newtheorem{proposition}[theorem]{Proposition}
\newtheorem{lemma}{Lemma}[section]
\newtheorem{corollary}[theorem]{Corollary}
\newtheorem{definition}{Definition}[section]
\newtheorem{question}[theorem]{Question}
\theoremstyle{definition}
\newcommand\norm[1]{\left\lVert#1\right\rVert}
\newcommand{\bs}[1]{\boldsymbol{#1}}
\newcommand{\wt}[1]{\widetilde{#1}}
\newcommand{\ol}[1]{\overline{#1}}
\newcommand\reallywidecheck[1]{%
\savestack{\tmpbox}{\stretchto{%
  \scaleto{%
    \scalerel*[\widthof{\ensuremath{#1}}]{\kern-.6pt\bigwedge\kern-.6pt}%
    {\rule[-\textheight/2]{1ex}{\textheight}}
  }{\textheight}%
}{0.5ex}}%
\stackon[1pt]{#1}{\scalebox{-1}{\tmpbox}}%
}
\DeclareMathOperator\supp{supp}
\def\XXint#1#2#3{{\setbox0=\hbox{$#1{#2#3}{\int}$ }
\vcenter{\hbox{$#2#3$ }}\kern-.6\wd0}}
\numberwithin{equation}{section}
\begin{document}

\maketitle

\begin{abstract}
In this paper, we resolve an important long-standing question of Alberti \cite{alberti2012generalized} that asks if there is a continuous vector field with bounded divergence and of class $W^{1, p}$ for some $p \geq 1$ such that the ODE with this vector field has nonunique trajectories on a set of initial conditions with positive Lebesgue measure? This question belongs to the realm of well-known DiPerna--Lions theory for Sobolev vector fields $W^{1, p}$. In this work, we design a divergence-free vector field in $W^{1, p}$ with $p < d$ such that the set of initial conditions for which trajectories are not unique is a set of full measure. The construction in this paper is quite explicit; we can write down the expression of the vector field at any point in time and space. Moreover, our vector field construction is novel. We build a vector field $\bs{u}$ and a corresponding flow map $X^{\bs{u}}$ such that after finite time $T > 0$, the flow map takes the whole domain $\mathbb{T}^d$ to a Cantor set $\mathcal{C}_\Phi$, i.e., $X^{\bs{u}}(T, \mathbb{T}^d) = \mathcal{C}_\Phi$ and  the Hausdorff dimension of this Cantor set is strictly less than $d$. The flow map $X^{\bs{u}}$ constructed as such is not a regular Lagrangian flow. The nonuniqueness of trajectories on a full measure set is then deduced from the existence of the regular Lagrangian flow in the DiPerna--Lions theory.
\end{abstract}

\section{Introduction}
\label{Intro}
In this paper, we consider the following system of ordinary differential equations (ODE)
\begin{eqnarray}
    \frac{d \bs{x}(t)}{dt} = \bs{u}(t, \bs{x}(t)) \qquad \text{with} \qquad \bs{x}(0) = \bs{x}_0 \in \mathbb{T}^d,
    \label{The ODE}
\end{eqnarray}
where $\mathbb{T}^d$ is a $d$-dimensional Torus and $\bs{u}:[0, T] \times \mathbb{T}^d \to \mathbb{R}^d$ is a vector field. We are interested in studying an important question regarding this ODE for the case when the vector field $\bs{u}$ has only Sobolev regularity. To describe the complete problem, let us start with a few definitions. 
\begin{definition}[Trajectory]
\label{def: Trajectory}
We say $\gamma^{\bs{u}}_{\bs{x}} : [0, T] \to \mathbb{T}^d$ is a trajectory corresponding to the vector field $\bs{u}$ starting at $\bs{x}$ if $\gamma^{\bs{u}}_{\bs{x}}$ is absolutely continuous, $\gamma^{\bs{u}}_{\bs{x}}$ solves the ODE (\ref{The ODE}), i.e., $\dot{\gamma\hspace{1pt}}^{\bs{u}}_{\bs{x}}(t) = \bs{u}(t, \gamma^{\bs{u}}_{\bs{x}}(t))$ $\forall t \in [0, T]$ and $\gamma^{\bs{u}}_{\bs{x}}(0) = \bs{x}$.
\end{definition}
\noindent
For a vector field $\bs{u}$, by bundling these trajectories for $\mathscr{L}^d$-a.e. $\bs{x} \in \mathbb{T}^d$, we can define a flow map as follows.
\begin{definition}[Flow map]
\label{def: Flow map}
    A map $X^{\bs{u}} : [0, T] \times \mathbb{T}^d \to \mathbb{T}^d$ is called a flow map corresponding to the vector field $\bs{u}$ if, for $\mathscr{L}^d$-a.e. $\bs{x} \in \mathbb{T}^d$, $X^{\bs{u}}(\cdot, \bs{x}) : [0, T] \to \mathbb{T}^d$ is a trajectory starting from $\bs{x}$. 
\end{definition}
With this definition, we say two flow maps $X^{\bs{u}}_1$ and $X^{\bs{u}}_2$ are the same if  trajectories $X^{\bs{u}}_1(\cdot, \bs{x})$  and $X^{\bs{u}}_2(\cdot, \bs{x})$ are the same for $\mathscr{L}^d$-a.e. $\bs{x} \in \mathbb{T}^d$.
A restricted class of flow maps is called \emph{regular Lagrangian flows}, which plays an important role in DiPerna--Lions theory \cite{DiPernaLions}. The following definition is taken from a paper by Ambrosio \cite{Ambrosio04}.
\begin{definition}[Regular Lagrangian flow]
\label{def: Regular Lagrangian flow}
A map $X^{\bs{u}}_{\textit{\tiny RL}} : [0, T] \times \mathbb{T}^d \to \mathbb{T}^d$ is a regular Lagrangian flow if it is a flow map corresponding to the vector field $\bs{u}$. In addition, it satisfies the following condition.\\
\textbullet \; For any time $t \in [0, T]$, $X^{\bs{u}}_{\textit{\tiny RL}}(t, \cdot)_{\#}\mathscr{L}^d \leq C \mathscr{L}^d$ for some constant $C > 0$.
\end{definition}
If the vector field is Lipschitz continuous, then the classical Cauchy--Lipschitz theorem guarantees the uniqueness of trajectories starting from any $\bs{x}$. Moreover, the corresponding flow map $X^{\bs{u}}$ is automatically a regular Lagrangian flow, i.e., the additional condition required in the definition (\ref{def: Regular Lagrangian flow}) is a consequence in the classical theory.

In the late 1980s, DiPerna and Lions \cite{DiPernaLions}, in a pioneering work, developed the theory of ODE for Sobolev vector fields with bounded divergence, where they showed the existence and uniqueness in the class of regular Lagrangian flow. The theory of DiPerna and Lions was later extended by Ambrosio \cite{Ambrosio04} to vector fields of bounded variation. Since the theory of DiPerna and Lions first came out, an interesting question remained: is it possible to show the uniqueness of a general flow map, as in definition (\ref{def: Flow map}), for Sobolev vector fields? In other words, is any flow map automatically a regular Lagrangian flow, just as in the classical case? In particular, there is a question by G. Alberti \cite{alberti2012generalized}, also stated in AMS 2023 Colloquium Lectures  by Prof. C. De Lellis \cite{delellisAMSlecture2023},  who asked:
\begin{question}[Alberti \cite{alberti2012generalized}]
Is there a continuous vector field with bounded divergence and of class $W^{1, p}$ for some $p \geq 1$ such that the nonuniqueness set has positive measure?
\label{Q: alberti}
\end{question}

The word `continuous' in above question is important as one falls under the Peano existence theorem if the vector field is continuous. In recent year, progress has been made to resolve this question. Caravenna and Crippa \cite{CaravennaCrippa18} showed that if $\bs{u} \in C([0, T]; W^{1, p}(\mathbb{T}^d, \mathbb{R}^d))$ with $p > d$, then the trajectories are unique for  $\mathscr{L}^d$-a.e. $\bs{x} \in \mathbb{T}^d$. 

If $p < d$, then using the method of convex integration Bru\'e, Colombo and De Lellis \cite{BrueColomboDeLellis21} constructed a divergence-free vector fields $\bs{u} \in C([0, T]; W^{1, p}(\mathbb{T}^d, \mathbb{R}^d) \cap L^s)$ for $s < \infty$ such that the trajectories are not unique on a set of positive measure, which also implies that there are flow maps $X^{\bs{u}}$ that are not regular Lagrangian. The method of convex integration in fluid dynamics was initially developed and introduced by De Lellis \& Sz\'ekelyhidi \cite{de2009euler, de2013dissipative}. The convex integration method was used to resolve the flexibility part of Onsager's conjecture \cite{isett2018proof}, which states that for every $\alpha < 1/3$ there exists a $C^{\alpha}_{t, \bs{x}}$ solution of the Euler equation which do not conserve energy. The reader can refer to the following studies \cite{buckmaster2015onsager, buckmaster2015anomalous, daneri2017non, buckmaster2019onsager, buckmaster2019nonuniqueness, buckmaster2021wild} for the application of the convex integration method in Euler and Navier--Stokes equations. 
Recently, in a groundbreaking work, Modena 
 \& Sz\'ekelyhidi \cite{modena2018non, modena2019non} brought the convex integration technique to the transport equation to construct nonunique solutions where the vector field is divergence-free and belongs to Sobolev space. Modena and Sattig \cite{modena2020convex} subsequently improve the range of integrability exponents. In both studies, the solution of the transport equation takes both positive and negative values.


Recently, \cite{BrueColomboDeLellis21}, constructed nonunique positive solutions of the transport equation with the same integrability range as \cite{modena2020convex}. \cite{BrueColomboDeLellis21} then proved the nonuniqueness of trajectories of the ODE system via Ambrosio's superposition principle which works for the positive solution of the transport equation. The same result regarding the nonuniqueness of trajectories cannot be concluded from the studies of \cite{modena2018non} and \cite{modena2020convex} because of the absence of a superposition principle for solutions of the transport equation that attain both positive and negative values. Using the same methodology, Giri and Sorella \cite{GiriSorella21} covered the case of autonomous vector fields. They constructed a divergence-free vector field $\bs{u} \in W^{1, p}(\mathbb{T}^d, \mathbb{R}^d)$ with $p < d-1$ and showed that the nonuniqueness of trajectories could occur on a set of positive measure. Finally, Pitcho and Sorella \cite{pitcho2023almost} using the convex integration showed that the nonuniqueness of trajectories can happen almost everywhere in space. The construction of \cite{BrueColomboDeLellis21, GiriSorella21, pitcho2023almost}, however, does not resolve Alberti's question as their vector field is not continuous in space. Besides the studies using the convex integration technique, we note that explicit examples of vector fields for which nonuniqueness happens on a set of full Hausdorff dimension but of measure zero are also known  \cite{FeffermanPooleyRodrigo21}.

Inspired by one of our recent studies on branching flow for optimal heat transport \cite{kumar2022three, kumar2023bulk}, in this paper, we give an explicit example of a vector field $\bs{u} \in C([0, T]; W^{1, p}(\mathbb{T}^d, \mathbb{R}^d)$ with $p < d$, for which we show that the set of initial conditions for which trajectories are not unique can be of full measure. Furthermore, the vector field that we construct also possesses H\"older continuity of exponent $\alpha$ arbitrarily close to one, which then resolves the Question \ref{Q: alberti} of Alberti.  The following theorem summarizes our result for the unsteady case.

\begin{theorem}[Main result: unsteady case]
\label{thm unsteady}
For every integer $d \geq 2$ and  every $1 \leq p < d$, $0 < \alpha < 1$ and $T > 0$, there exists a divergence-free vector field $\bs{u} \in C([0, T]; W^{1, p}(\mathbb{T}^d, \mathbb{R}^d)) \cap C^{\alpha}([0, T] \times \mathbb{T}^d, \mathbb{R}^d)$ such that the set of initial conditions for which trajectories are not unique is a full measure set.
\end{theorem}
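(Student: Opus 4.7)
The plan is to build $\bs{u}$ together with an explicit flow map $X^{\bs{u}}$ such that $X^{\bs{u}}(T, \mathbb{T}^d) = \mathcal{C}_\Phi$ for a Cantor-type set $\mathcal{C}_\Phi$ of Hausdorff dimension strictly less than $d$, and then to deduce the nonuniqueness of trajectories on a full measure set by comparing $X^{\bs{u}}$ with the regular Lagrangian flow granted by DiPerna--Lions theory. The construction of $\bs{u}$ I have in mind is iterative in time: I would split $[0,T]$ into stages $[t_{k-1}, t_k]$ with $t_k \nearrow T$, and on each stage let $\bs{u}$ be a rescaled copy of a single divergence-free building block that deforms a collection of disjoint closed cubes $\mathcal{E}_{k-1}$ into a refined collection $\mathcal{E}_k \subset \mathcal{E}_{k-1}$ consisting of strictly more cubes of strictly smaller sidelength, while vanishing outside $\mathcal{E}_{k-1}$. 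The limiting set $\mathcal{C}_\Phi = \bigcap_k \mathcal{E}_k$ then acquires a prescribed Hausdorff dimension from the chosen ratios of sidelengths and multiplicities.

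The model building block is a compactly supported, divergence-free squeeze-and-rearrange velocity that pushes fluid from a large cube into a union of smaller sub-cubes and diverts the complementary material into the gaps that will be discarded at the next stage. Because it is divergence-free and the stage flow is (away from a null set) a diffeomorphism, Lebesgue measure on $\mathcal{E}_{k-1}$ is transported to Lebesgue measure on $\mathcal{E}_k$; the singularity of the pushforward emerges only in the limit $k \to \infty$, when $X^{\bs{u}}(T,\cdot)_\#\mathscr{L}^d$ concentrates on the Lebesgue-null set $\mathcal{C}_\Phi$. Continuity of $\bs{u}$ at $t = T$ will be arranged via smooth cutoffs at the endpoints of each $[t_{k-1}, t_k]$ and by forcing the stage velocities to decay as $k \to \infty$.

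The main obstacle is the joint choice of the geometric parameters --- sidelengths $\ell_k$, multiplicities $m_k$, and stage durations $\tau_k = t_k - t_{k-1}$ --- so that the resulting $\bs{u}$ lies in $C([0,T]; W^{1,p}(\mathbb{T}^d)) \cap C^{\alpha}([0,T]\times\mathbb{T}^d)$ while still achieving $\dim_H \mathcal{C}_\Phi < d$. The building block on stage $k$ has typical speed of order $\ell_k/\tau_k$ and gradient of order $1/\tau_k$, supported on a set of measure $|\mathcal{E}_{k-1}|$; this contributes a term of order $\tau_k^{1-p} |\mathcal{E}_{k-1}|$ to $\int_0^T \|\nabla \bs{u}\|_{L^p}^p \, dt$ and $(\ell_k/\tau_k)^\alpha$ to the H\"older seminorm. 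Tuning the parameters as suitable geometric sequences and summing these tails yields a feasibility condition that is compatible with any $p < d$ and any $\alpha < 1$; the strict inequality $p < d$ is precisely what allows the derivative budget to concentrate mass onto a set of dimension strictly below $d$, while the H\"older constraint is met by making $\tau_k$ decay slowly enough relative to $\ell_k$.

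Once $\bs{u}$ and $X^{\bs{u}}$ are constructed with these properties, the deduction is short. Since $\bs{u}$ is divergence-free, bounded, and of class $C([0,T]; W^{1,1}(\mathbb{T}^d))$, DiPerna--Lions theory produces a regular Lagrangian flow $X^{\bs{u}}_{\textit{\tiny RL}}$ with $X^{\bs{u}}_{\textit{\tiny RL}}(T, \cdot)_\# \mathscr{L}^d \leq C \mathscr{L}^d$. As $\mathcal{C}_\Phi$ has Lebesgue measure zero, it follows that $X^{\bs{u}}_{\textit{\tiny RL}}(T, \bs{x}) \notin \mathcal{C}_\Phi$ for $\mathscr{L}^d$-a.e. $\bs{x}$, whereas by construction $X^{\bs{u}}(T, \bs{x}) \in \mathcal{C}_\Phi$ for every $\bs{x} \in \mathbb{T}^d$. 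Hence the two trajectories $X^{\bs{u}}(\cdot, \bs{x})$ and $X^{\bs{u}}_{\textit{\tiny RL}}(\cdot, \bs{x})$ are distinct on a set of initial data of full Lebesgue measure, which proves Theorem \ref{thm unsteady}.
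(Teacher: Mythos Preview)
There is a genuine conceptual gap in the construction. You ask the stage-$k$ building block to be divergence-free, to vanish outside $\mathcal{E}_{k-1}$, to generate a flow that is ``(away from a null set) a diffeomorphism,'' and to ``deform $\mathcal{E}_{k-1}$ into $\mathcal{E}_k \subset \mathcal{E}_{k-1}$.'' These requirements are incompatible: a divergence-free field with smooth flow is measure-preserving, so the time-$t_k$ map sends $\mathcal{E}_{k-1}$ onto a set of measure $|\mathcal{E}_{k-1}|$, never onto the strictly smaller $\mathcal{E}_k$. Your sentence ``Lebesgue measure on $\mathcal{E}_{k-1}$ is transported to Lebesgue measure on $\mathcal{E}_k$'' is exactly this contradiction. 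What a divergence-free ``squeeze-and-rearrange'' can do is permute material within $\mathcal{E}_{k-1}$, sending part of it into $\mathcal{E}_k$ and the rest into the gaps; but then the gap material is frozen by all later stages (you make the field vanish outside $\mathcal{E}_k$) and never reaches $\mathcal{C}_\Phi$. So the conclusion $X^{\bs{u}}(T,\mathbb{T}^d)\subseteq\mathcal{C}_\Phi$ simply does not follow from your mechanism, and with it the full-measure nonuniqueness claim collapses.

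The paper evades this obstruction by constructing the flow in the \emph{expanding} direction and then time-reversing. One builds a smooth divergence-free field $\bs{v}$ on $[0,\tau_\infty)\times\mathbb{T}^d$, vanishing at $t=\tau_\infty$, whose unique forward flow carries $\mathcal{P}_\Phi(\bs{\mathfrak{s}})\in\mathcal{C}_\Phi$ to $\mathcal{P}_\Theta(\bs{\mathfrak{s}})$, surjecting onto $\mathbb{T}^d$. Each $\bs{v}_i$ merely \emph{translates} the $(i{+}1)$-th generation Cantor cubes to align their centers with dyadic centers, and is supported only in thin neighborhoods of these small cubes (measure $\sim 2^{-\nu d i}$), not on the whole previous-generation set; this smallness of support is precisely what delivers the $W^{1,p}$ bound for $p<d$, in contrast to your estimate which places the support on all of $\mathcal{E}_{k-1}$. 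Setting $\bs{u}(t,\bs{x})=-\tfrac{\tau_\infty}{T}\bs{v}(\tau_\infty(1-t/T),\bs{x})$, the nonuniqueness for $\bs{u}$ is concentrated at $t=0$ (where $\bs{u}=0$ and is only $C^\alpha$): for each $\bs{x}_e\in\mathbb{T}^d$ one \emph{selects} $\bs{\mathfrak{s}}$ with $\mathcal{P}_\Theta(\bs{\mathfrak{s}})=\bs{x}_e$ and defines the non-regular flow $X^{\bs{u}}$ by running the $\bs{v}$-trajectory from $\mathcal{P}_\Phi(\bs{\mathfrak{s}})$ backward. Your final paragraph, comparing $X^{\bs{u}}$ with the DiPerna--Lions regular Lagrangian flow, is correct and matches the paper, but it only kicks in once a flow satisfying $X^{\bs{u}}(T,\mathbb{T}^d)\subseteq\mathcal{C}_\Phi$ has actually been produced.
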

The unsteady vector field construction to prove the above theorem can be trivially modified to produce a similar result using a steady vector field. Essentially, we trade time for one space dimension. The result is summarized in the following theorem.
\begin{theorem}[Main result: steady case]
\label{thm steady}
For every integer $d \geq 3$ and  every $1 \leq p < d-1$ and $0 < \alpha < 1$, there exists a divergence-free vector field $\bs{u}^s \in W^{1, p}(\mathbb{T}^d, \mathbb{R}^d) \cap C^{\alpha}(\mathbb{T}^d, \mathbb{R}^d)$ such that the set of initial conditions for which trajectories are not unique is a full measure set.
\end{theorem}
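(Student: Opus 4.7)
The plan is to suspend the unsteady construction of Theorem~\ref{thm unsteady} in one higher spatial dimension, treating the new coordinate as a reparametrized time variable. Let $\bs{v} : [0,T] \times \mathbb{T}^{d-1} \to \mathbb{R}^{d-1}$ be the vector field produced by Theorem~\ref{thm unsteady} in dimension $d-1$ with the same integrability $p < d-1$ and Hölder exponent $\alpha$. After rescaling time and arranging the construction so that $\bs{v}$ extends periodically to a field on $\mathbb{T} \times \mathbb{T}^{d-1}$ (for instance by requiring $\bs{v}$ to be supported away from a thin collar around the endpoints, so that it matches the zero field there), I would define the steady field
\begin{equation*}
   \bs{u}^s(\bs{x}, x_d) \coloneqq \bigl(\bs{v}(x_d, \bs{x}),\, 1\bigr) \qquad \text{on } \mathbb{T}^{d-1} \times \mathbb{T} \cong \mathbb{T}^d,
\end{equation*}
where $\bs{x} = (x_1, \dots, x_{d-1})$ and $x_d$ plays the role of time.

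The first verification step is that $\bs{u}^s$ is divergence-free: since $\diverge_{\bs{x}} \bs{v}(x_d,\cdot) = 0$ for every $x_d$ and the last component of $\bs{u}^s$ is constant, $\diverge \bs{u}^s = 0$. The Hölder continuity $\bs{v} \in C^{\alpha}([0,T] \times \mathbb{T}^{d-1})$ transfers directly to $C^{\alpha}(\mathbb{T}^d)$ for $\bs{u}^s$, provided the periodic extension respects this regularity. For the Sobolev bound, the transverse derivatives $\partial_{x_i} \bs{u}^s$, $i < d$, are controlled by the $W^{1,p}$ norm of $\bs{v}$ in its spatial variables, while the longitudinal derivative $\partial_{x_d} \bs{u}^s = (\partial_t \bs{v}, 0)$ requires $\partial_t \bs{v} \in L^p$. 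I would then identify trajectories of $\bs{u}^s$: writing $(\bs{\gamma}(s), y(s))$, the $y$-component satisfies $\dot y = 1$ so $y(s) = x_{d,0} + s$, and $\bs{\gamma}$ solves the time-shifted unsteady ODE $\dot{\bs{\gamma}}(s) = \bs{v}(x_{d,0}+s, \bs{\gamma}(s))$ with $\bs{\gamma}(0) = \bs{x}_0$. Applying Theorem~\ref{thm unsteady} to this time-shifted system (valid by the time-periodicity of $\bs{v}$) yields non-uniqueness of $\bs{\gamma}$ for $\mathscr{L}^{d-1}$-a.e.\ $\bs{x}_0$ at each fixed $x_{d,0}$, and Fubini then promotes this to non-uniqueness of trajectories of $\bs{u}^s$ for $\mathscr{L}^d$-a.e.\ $(\bs{x}_0, x_{d,0}) \in \mathbb{T}^d$.

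The main obstacle will be arranging the unsteady field $\bs{v}$ to be time-periodic while preserving both the spatial $W^{1,p}$ regularity and, crucially, ensuring $\partial_t \bs{v} \in L^p$, which is needed for $\bs{u}^s \in W^{1,p}(\mathbb{T}^d)$ but is not explicitly listed in the statement of Theorem~\ref{thm unsteady}. I expect this to be a bookkeeping matter rather than a conceptual difficulty: since the abstract describes the flow map as built via an explicit nested Cantor-type construction, the vector field should have well-controlled time behavior between switching times, and one may either verify $\partial_t \bs{v} \in L^p$ directly from the construction, or perform a harmless smooth time-reparametrization so that $\bs{v}$ is trivial in a collar near $t=0$ and $t=T$, which then allows a painless periodic gluing on $\mathbb{T}$. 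A clean alternative is to design $\bs{v}$ on $\mathbb{T} \times \mathbb{T}^{d-1}$ from the outset with compact temporal support strictly inside $\mathbb{T}$, which converts the whole issue into a cosmetic adjustment of Theorem~\ref{thm unsteady}.
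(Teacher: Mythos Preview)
Your proposal is correct and follows essentially the same route as the paper: the $(d-1)$-dimensional unsteady field is suspended over an extra coordinate playing the role of time, with constant last component equal to $1$. The paper achieves the periodic gluing exactly via your ``compact temporal support'' alternative (it sets $\bs{u}^s=(0,\dots,0,1)$ on $0\le x_d<1-\varepsilon$ and squeezes a time-rescaled copy of $\bs{u}^{d-1}$ into $[1-\varepsilon,1]$), and the $L^p$ control on $\partial_t\bs{v}$ that you flag is available from property~(vii) of Proposition~\ref{Prop of vi}.
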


The proof of the theorem in the unsteady case case involves a Cantor set construction $\mathcal{C}_\Phi$ of measure zero. The vector field we designed to prove Theorem \ref{thm unsteady} is divergence-free and it translates (in space) cubes of various generations from the Cantor set $\mathcal{C}_\Phi$ construction. The translation of a typical cube is carried out using a `blob flow' previously introduced by the author in context of optimal heat transport \cite{anujkumarstonybrook22}. Corresponding to this vector field, we design a flow map $X^{\bs{u}}$ that takes the whole domain $\mathbb{T}^d$ and maps it to the Cantor set $\mathcal{C}_\Phi$. The construction carried out in this paper is quite explicit; we can write down the precise mathematical expression of the vector field and a trajectory from the flow map $X^{\bs{u}}$ for any initial condition as a function of time. Other than the H\"older continuity of our vector field, this aspect of our construction makes it different from the previous study of \cite{BrueColomboDeLellis21, GiriSorella21, pitcho2023almost} where even though the nonuniqueness of trajectories on a set positive measure was shown it is not clear how individual trajectories move in space as a function of time. Furthermore, the mechanism behind the vector field design is novel and has not previously been employed in any other study.

The importance of flow designs in fluid research can hardly be overstated. Flow designs are used to produce examples of nonuniqueness \cite{depauw2003non, de2022smoothing}, to answer questions related to anomalous dissipation (a typical characteristic of turbulent flow) in a passive scalar \cite{drivas22anomdissp, colombo2023anomalous, armstrong2023anomalous, elgindi2023norm} or velocity field \cite{brue2023anomalous}, as well as in problems related to mixing \cite{AlbertiCrippaMazzucato19, YaoZlatos17, ElgindiZlatosuniversalmixer}. In all of these studies, the two most popular flow mechanisms by far are the checkered board flow and alternating shear. However, both of these mechanisms do not appear to work to produce a nonuniqueness example for the Question \ref{Q: alberti} asked by Alberti \cite{alberti2012generalized}. Therefore, our work adds one more mechanism to the list of already-known mechanisms that we believe can potentially be very useful in problems related to nonuniqueness, anomalous dissipation, and mixing. Indeed, based on the initial idea from this paper, \cite{bruecolombokumar24} has constructed sophisticated vector fields for which there are two solutions to the transport equation, going beyond the integrability exponent range previously covered using the method of convex integration and matching sharply with the known uniqueness boundary \cite{BrueColomboDeLellis21}.


\subsection{Organization of the paper}
The paper is arranged as follows. As our design of vector field $\bs{u}$ will involve a Cantor set, we give the required Cantor set construction and prove a few associated basic lemmas in section \ref{sec: Cantor set}. We give an overview of the design of the vector field $\bs{u}$ in section \ref{section: overview}. In section \ref{sec: proof of thms}, we give proof of Theorem \ref{thm unsteady} and briefly summarize the changes required to prove Theorem \ref{thm steady}. Finally, in section \ref{sec: blob flow}, we construct a useful flow that helps translate cubes in the domain and is essential in the design of vector field $\bs{u}$.


\section{Cantor set construction}
\label{sec: Cantor set}
The purpose of this section is to fix some notations and give a Cantor set construction in a way that is readily usable throughout the paper. We also state and prove a few basic lemmas required later for constructing the vector field.

\noindent
\textbf{Notation 1.} 
In this paper, we will work with both $\mathbb{R}^d$ and a $d$-dimensional torus $\mathbb{T}^d = \mathbb{R}^d/\mathbb{Z}^d$. We identify point $\bs{x} \in \mathbb{R}^d$ or $\mathbb{T}^d$ through its components as $\bs{x} = (x_1, \dots, x_d)$, where $x_i \in \mathbb{R}$ or $\mathbb{T}$ for $1 \leq i \leq d$. We will also use the notation $(\bs{x})_i$ to denote the $i$th component of $\bs{x}$ when it is more convenient to do so. We use $\mathscr{L}^d$ to mean the $d$-dimensional Lebesgue measure on $\mathbb{R}^d$ or $\mathbb{T}^d$. Given a vector field $\bs{u}$, we define the support of $\bs{u}$ in the time variable as
\begin{eqnarray}
    \supp_t \bs{u} \coloneqq \ol{\{t \in \mathbb{R} | \bs{u}(t, \bs{x}) = \bs{0} \; \forall \, \bs{x} \in \mathbb{R}^d \text{ or }\mathbb{T}^d\}}.
\end{eqnarray}
Throughout the paper, we will use $a \lesssim b$ to mean that $a < c \, b$ for some constant $c > 0$ independent of any parameter, except we will allow this constant $c$ to depend on the dimension $d$.
Given a point $\bs{x}_c$ in $\mathbb{R}^d$ or $\mathbb{T}^d$ and $\ell > 0$, we define an open cube of length $\ell$ centered at $\bs{x}_c$ as follows
\begin{eqnarray}
    Q(\bs{x}_c, \ell) \coloneqq \left\{\bs{x} \in \mathbb{R}^d \text{ or } \mathbb{T}^d \, \left| \, |x_i - x^c_i| < \frac{\ell}{2}, \; 1 \leq i \leq d \right. \right\}, 
\end{eqnarray}
and a close cube $\overline{Q}(\bs{x}_c, \ell)$ to be the closure of $Q(\bs{x}_c, \ell)$.

\noindent
\textbf{Notation 2.}
In preparation for constructing Cantor sets, we first define a few important sets and sequences. We let,
\begin{eqnarray}
    \mathbb{I} \coloneqq \{1, -1\}, \qquad \text{and} \qquad  \mathbb{I}^d \coloneqq \{\bs{s} = (s^1, \dots, s^d) \, | \, s^i \in \mathbb{I}\}. 
\end{eqnarray}
For every $n \in \mathbb{N}$, we define a set of $n-$tuples with elements from $\mathbb{I}^d$ as
\begin{eqnarray}
    S_n \coloneqq \{\mathfrak{s}  = (\bs{s}_1, \dots, \bs{s}_n) \, | \, \bs{s}_i \in \mathbb{I}^d, 1 \leq i \leq n\}. 
\end{eqnarray}
It is clear that $|S_n| = 2^{nd}$. We define a set of sequences with elements from $\mathbb{I}^d$ as
\begin{eqnarray}
    \mathcal{S} \coloneqq \{\bs{\mathfrak{s}} = (\bs{s}_1, \bs{s}_2, \dots ) \, | \, \bs{s}_i \in \mathbb{I}^d, i \in \mathbb{N}\}. 
\end{eqnarray}
We note that we use Fraktur font to denote elements from a  set $S_n$ and \emph{bold} Fraktur font to denote elements from the set $\mathcal{S}$.
Given $\mathfrak{s} \in S_n$ with $n \geq 2$, we will denote
\begin{eqnarray}
    \mathfrak{s}^\prime \coloneqq (\bs{s}_1, \dots, \bs{s}_{n-1}) \in S_{n-1},
    \label{def: prime notation mathfrak s}
\end{eqnarray}
and we define $\sigma_n : \mathcal{S} \to S_n$ as follows
\begin{eqnarray}
    \sigma_n(\bs{\mathfrak{s}}) \coloneqq (\bs{s}_1, \dots \bs{s}_n) \qquad \text{for} \qquad \bs{\mathfrak{s}} \in \mathcal{S}. 
\end{eqnarray}

\noindent
\textbf{Notation 3.}
Consider a sequence $\Psi \coloneqq \{\psi_i\}_{i = 1}^\infty$ with elements $0 < \psi_i \leq 1$ and a sequence of $1$'s 
\begin{eqnarray}
    \Theta \coloneqq \{1, 1, \dots \}. 
    \label{def: Theta seq of 1s}
\end{eqnarray}
We define a few lengths corresponding to $\Psi$ as
\begin{eqnarray}
   \ell^0_\Psi = 1 \qquad \qquad \text{and} \qquad \qquad \ell^n_\Psi = \frac{\prod\limits_{i = 1}^n \psi_i}{2^n} \qquad \text{for} \qquad n \in \mathbb{N}. 
\end{eqnarray}
With this definition and the fact that $0 < \psi_i \leq 1$, we note that 
\begin{eqnarray}
    \ell^n_\Psi \leq \frac{\ell^{n-1}_\Psi}{2} \qquad \text{for} \qquad n \in \mathbb{N}. 
\end{eqnarray}
The significance of $\ell^n_\Psi$ is shown in figure \ref{fig: Cantor set}. The length $\ell^n_\Psi$ represents the size of the $n$-th generation cubes in the Cantor set construction process or the $n$-th generation dyadic cubes if $\Psi$ is chosen to be $\Theta$ (the sequence of $1$'s).

Next, for a given sequence $\Psi$ as above, we associate every element $\mathfrak{s}$ in $S_n$ with a point $\bs{x}$ in $\mathbb{T}^d$. For every $n \in \mathbb{N}$, we define $P^n_\Psi : S_n \to \mathbb{T}^d$ as
\begin{eqnarray}
    P^n_\Psi(\mathfrak{s}) \coloneqq \left(\frac{1}{2} + \frac{1}{4} \sum_{i = 1}^n s_i^1 \ell^{i-1}_\Psi \text{{\Large,}} \; \dots \; \text{{\Large,}} \; \frac{1}{2} + \frac{1}{4} \sum_{i = 1}^n s_i^d \ell^{i-1}_\Psi \right) \qquad \text{for} \qquad \mathfrak{s} = (\bs{s}_1, \dots , \bs{s}_n) \in S_n.
    \label{def: Pn Psi new}
\end{eqnarray}
The positions $P^n_\Psi(\mathfrak{s})$ would denote the centers of the $n$-th generation cubes in the Cantor set construction process (see figure \ref{fig: Cantor set}), and $P^n_\Theta(\mathfrak{s})$ denote the centers of the $n$-th generation dyadic cubes.
We also define $\wt{P}^n_\Psi : S_n \to \mathbb{T}^d$ as $\wt{P}^1_\Psi \coloneqq P^1_\Psi$ and 
\begin{align}
    \wt{P}^n_\Psi(\mathfrak{s}) & \coloneqq P^n_\Psi(\mathfrak{s}) - P^{n-1}_\Psi(\mathfrak{s}^\prime) + P^{n-1}_\Theta(\mathfrak{s}^\prime) \nonumber \\
    & = \left(\frac{1}{2} + \frac{1}{4} \sum_{i = 1}^{n-1} \frac{s_i^1}{2^{i-1}} + \frac{1}{4} s^1_n \ell^{n-1}_\Psi \text{{\Large,}} \; \dots \; \text{{\Large,}} \; \frac{1}{2} + \frac{1}{4} \sum_{i = 1}^{n-1} \frac{s_i^d}{2^{i-1}} + \frac{1}{4} s^d_n \ell^{n-1}_\Psi \right) \qquad \text{for} \quad n \geq 2.
    \label{def: Pn tilde}
\end{align}
We will need $\wt{P}^n_\Psi$ while constructing the vector field, which will be based on translating various cubes (see section \ref{section: overview}). In that respect, $\wt{P}^n_\Psi(\mathfrak{s})$ represents the center of an $n$-th generation cube in the Cantor set construction process relative to the center $P^{n-1}_\Psi(\mathfrak{s}^\prime)$ of a cube  from the previous generation, which is then shifted to $P^{n-1}_\Theta(\mathfrak{s}^\prime)$, the center of an $(n-1)$th-generation dyadic cube.
Finally, we define $\mathcal{P}_\Psi : \mathcal{S} \to \mathbb{T}^d$ as
\begin{eqnarray}
    \mathcal{P}_\Psi(\bs{\mathfrak{s}}) \coloneqq \left(\frac{1}{2} + \frac{1}{4} \sum_{i = 1}^\infty s_i^1 \ell^{i-1}_\Psi \text{{\Large,}} \; \dots \; \frac{1}{2} + \frac{1}{4} \sum_{i = 1}^\infty s_i^d \ell^{i-1}_\Psi \right).
\end{eqnarray}
 \begin{figure}[h]
\centering
 \includegraphics[scale = 0.65]{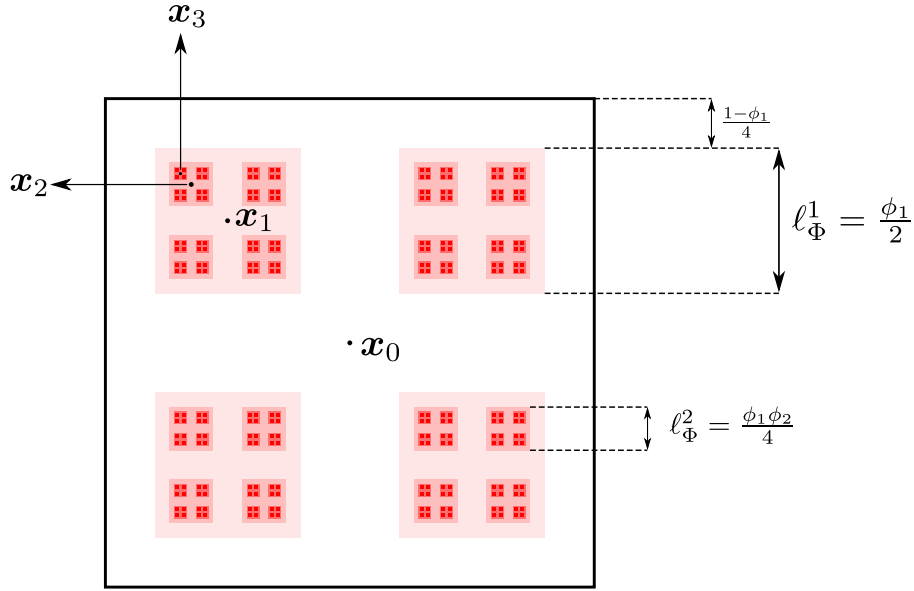}
 \caption{An illustration of the construction process of the Cantor set $\mathcal{C}_\Phi$ in two dimensions. The sequence $\Phi$ is given by (\ref{def: seq Phi}), and we have chosen $\nu = \frac{3}{4}$. The figure shows the collection of first four generations of cubes, $C^1_\Phi$, $C^2_\Phi$, $C^3_\Phi$ and $C^4_\Phi$, in increasingly less pale red color. In the figure, $\bs{x}_0 = (1/2, 1/2)$, $\bs{x}_1 = P^1_\Phi(\mathfrak{s}_1)$, $\bs{x}_2 = P^2_\Phi(\mathfrak{s}_2)$ and $\bs{x}_3 = P^3_\Phi(\mathfrak{s}_3)$. Here, $\mathfrak{s}_1$, $\mathfrak{s}_2$ and $\mathfrak{s}_3$ are the elements of $S_1$, $S_2$ and $S_3$, respectively, given by $\mathfrak{s}_1 = \{(-1, -1)\}$, $\mathfrak{s}_2 = \{(-1, -1), (-1, -1)\}$ and $\mathfrak{s}_3 = \{(-1, -1), (-1, -1), (-1, -1)\}$.}
 \label{fig: Cantor set}
\end{figure}
Next, we state a few useful lemmas whose proof is fairly straightforward and given here only for completeness.
\begin{lemma}
Let $\Psi = \{\psi_i\}_{i=1}^\infty$ be a sequence with $0 < \psi_i < 1$ and let $\mathfrak{s}$ and $\mathfrak{r}$ be two different elements of $S_n$ then the closed cubes $\ol{Q}(P^n_\Psi(\mathfrak{s}), \ell^n_\Psi)$ and $\ol{Q}(P^n_\Psi(\mathfrak{r}), \ell^n_\Psi)$ are disjoint.
\label{lemma: disjoint closed cubes}
\end{lemma}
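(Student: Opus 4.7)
The plan is to prove this by induction on $n$, after first establishing the key geometric sub-claim that each $n$-th generation cube is strictly contained inside its ``parent'' $(n-1)$-th generation cube. Concretely, I would show: for every $n \geq 2$ and every $\mathfrak{s} \in S_n$,
\begin{equation*}
\ol{Q}\bigl(P^n_\Psi(\mathfrak{s}),\, \ell^n_\Psi\bigr) \subset \ol{Q}\bigl(P^{n-1}_\Psi(\mathfrak{s}'),\, \ell^{n-1}_\Psi\bigr).
\end{equation*}
This is immediate from the definition (\ref{def: Pn Psi new}): componentwise $(P^n_\Psi(\mathfrak{s}))_k = (P^{n-1}_\Psi(\mathfrak{s}'))_k + \tfrac{1}{4}s_n^k \ell^{n-1}_\Psi$, so in coordinate $k$ the child cube extends at most $\tfrac{1}{4}(1+\psi_n)\ell^{n-1}_\Psi$ from the parent's center, which is strictly less than $\tfrac{1}{2}\ell^{n-1}_\Psi$ precisely because $\psi_n < 1$.

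For the base case $n=1$, if $\mathfrak{s} = (\bs{s}_1) \neq (\bs{r}_1) = \mathfrak{r}$, then $s_1^k \neq r_1^k$ for some $k$, so $|(P^1_\Psi(\mathfrak{s}))_k - (P^1_\Psi(\mathfrak{r}))_k| = \tfrac{1}{2}$, which strictly exceeds $\ell^1_\Psi = \psi_1/2$. Hence the closed cubes of side $\ell^1_\Psi$ are disjoint in that coordinate.

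For the inductive step, I split into two cases. If $\mathfrak{s}' \neq \mathfrak{r}'$, the induction hypothesis gives that the parent cubes $\ol{Q}(P^{n-1}_\Psi(\mathfrak{s}'),\ell^{n-1}_\Psi)$ and $\ol{Q}(P^{n-1}_\Psi(\mathfrak{r}'),\ell^{n-1}_\Psi)$ are disjoint, and the child cubes inherit disjointness from the sub-claim. If instead $\mathfrak{s}' = \mathfrak{r}'$ but $\bs{s}_n \neq \bs{r}_n$, then some coordinate $k$ has $s_n^k = -r_n^k$, so the two $n$-th generation cubes share the same parent center in coordinate $k$ but are offset by $\pm \tfrac{1}{4}\ell^{n-1}_\Psi$, while each only extends $\tfrac{1}{4}\psi_n \ell^{n-1}_\Psi$ from its own center; the strict inequality $\psi_n < 1$ leaves a positive gap of $\tfrac{1}{2}(1-\psi_n)\ell^{n-1}_\Psi$ between them in coordinate $k$, giving disjointness.

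I do not anticipate any real obstacle: the whole argument is a careful bookkeeping of coordinates. The only ``load-bearing'' point is the strict inequality $\psi_i < 1$, which is used both in the sub-claim (to get strict containment of child in parent) and in the sibling case of the induction (to get a strictly positive gap); both places would fail for $\psi_i = 1$, which correctly matches the intuition that standard dyadic cubes obtained from $\Psi = \Theta$ have closures that touch along their boundaries.
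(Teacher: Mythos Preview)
Your proof is correct, but it takes a genuinely different route from the paper's. The paper argues directly rather than by induction: it picks an index $i$ and a coordinate $j$ where $s_i^j \neq r_i^j$, and shows in one line that the $j$-th coordinates of the centers satisfy $\bigl|(P^n_\Psi(\mathfrak{s}) - P^n_\Psi(\mathfrak{r}))_j\bigr| \geq \tfrac{1}{2}\ell^{n-1}_\Psi$; since $\tfrac{1}{2}\ell^{n-1}_\Psi = \ell^n_\Psi/\psi_n > \ell^n_\Psi$, the closed cubes are separated in that coordinate. No induction, no case split, no auxiliary nesting lemma.

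Your approach is more structural: you first prove the nesting $\ol{Q}(P^n_\Psi(\mathfrak{s}),\ell^n_\Psi) \subset \ol{Q}(P^{n-1}_\Psi(\mathfrak{s}'),\ell^{n-1}_\Psi)$---which the paper in fact states and proves separately as its Lemma~\ref{lemma: nested cubes}---and then run an induction with a parent/sibling case split. What this buys you is a cleaner conceptual picture of the Cantor hierarchy, and it effectively proves the disjointness lemma and the nesting lemma in one package. What the paper's direct argument buys is brevity: a single coordinate estimate does all the work, with no induction machinery. Both arguments isolate the strict inequality $\psi_i < 1$ as the key hypothesis, exactly as you note in your final paragraph.
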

\begin{proof}
Since $\mathfrak{s} = (\bs{s}_1, \dots, \bs{s}_n)$ and $\mathfrak{r} = (\bs{r}_1, \dots, \bs{r}_n)$ are different, that means for some $1 \leq i \leq n$, $\bs{s}_i = (s^1_i, \dots, s^d_i)$ and $\bs{r}_i = (r^1_i, \dots, r^d_i)$ are different. This, in turn, implies that for some $1 \leq j \leq d$, $s^j_i$ and $r^j_i$ are different. From the definition of $P^n_\Psi$ in (\ref{def: Pn Psi new}), we see that
\begin{eqnarray}
    \left|\left(P^n_\Psi(\mathfrak{s}) - P^n_\Psi(\mathfrak{r})\right)_j\right| \geq \frac{1}{2} \ell^{n-1}_\Psi, \nonumber
\end{eqnarray}
where $(\;\cdot\;)_j$ denotes the $j$-th component. Now suppose that $\bs{x} \in \ol{Q}(P^n_\Psi(\mathfrak{s}), \ell^n_\Psi)$, this means
\begin{eqnarray}
    \left|\left(P^n_\Psi(\mathfrak{s}) - \bs{x}\right)_j\right| \leq \frac{\ell^n_\Psi}{2},
    \label{eqn: inside close cube cond}
\end{eqnarray}
which implies
\begin{eqnarray}
    \left|\left(P^n_\Psi(\mathfrak{r}) - \bs{x}\right)_j\right| \geq \left|\left(P^n_\Psi(\mathfrak{s}) - P^n_\Psi(\mathfrak{r})\right)_j\right| - \left|\left(P^n_\Psi(\mathfrak{s}) - \bs{x}\right)_j\right|  \geq \frac{1}{2} \ell^{n-1}_\Psi - \frac{1}{2}\ell^n_\Psi \geq \frac{1}{\psi_i} \ell^{n}_\Psi - \frac{1}{2}\ell^n_\Psi >  \frac{1}{2}\ell^n_\Psi. \nonumber
\end{eqnarray}
Therefore, $\bs{x} \not\in \ol{Q}(P^n_\Psi(\mathfrak{r}), \ell^n_\Psi)$, which then completes the proof.
\end{proof}
\begin{lemma}
    Let the sequence $\Psi$ be the same as in the previous lemma, and let $\bs{\mathfrak{s}}$ and $\bs{\mathfrak{r}}$ be two different elements of $\mathcal{S}$. Then $\mathcal{P}_\Psi(\bs{\mathfrak{s}}) \neq \mathcal{P}_\Psi(\bs{\mathfrak{r}})$.
    \label{lemma: not equal infinite seq points}
\end{lemma}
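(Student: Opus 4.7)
The plan is to reduce the statement to the disjointness result of Lemma \ref{lemma: disjoint closed cubes} by showing that any point in the image of $\mathcal{P}_\Psi$ lies inside the closed cube at every finite truncation of its index sequence. Concretely, since $\bs{\mathfrak{s}} \neq \bs{\mathfrak{r}}$, there is a smallest index $n \in \mathbb{N}$ with $\bs{s}_n \neq \bs{r}_n$, which means $\sigma_n(\bs{\mathfrak{s}})$ and $\sigma_n(\bs{\mathfrak{r}})$ are distinct elements of $S_n$. If I can verify that
\begin{equation*}
\mathcal{P}_\Psi(\bs{\mathfrak{s}}) \in \ol{Q}\!\left(P^n_\Psi(\sigma_n(\bs{\mathfrak{s}})),\,\ell^n_\Psi\right) \qquad \text{and} \qquad \mathcal{P}_\Psi(\bs{\mathfrak{r}}) \in \ol{Q}\!\left(P^n_\Psi(\sigma_n(\bs{\mathfrak{r}})),\,\ell^n_\Psi\right),
\end{equation*}
then Lemma \ref{lemma: disjoint closed cubes} immediately forces $\mathcal{P}_\Psi(\bs{\mathfrak{s}}) \neq \mathcal{P}_\Psi(\bs{\mathfrak{r}})$.

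For the containment, I would compute the $j$-th component of the displacement directly from the definitions of $\mathcal{P}_\Psi$ and $P^n_\Psi$: the first $n$ terms in the series defining $\mathcal{P}_\Psi(\bs{\mathfrak{s}})$ cancel the corresponding terms in $P^n_\Psi(\sigma_n(\bs{\mathfrak{s}}))$, leaving a tail
\begin{equation*}
\bigl(\mathcal{P}_\Psi(\bs{\mathfrak{s}}) - P^n_\Psi(\sigma_n(\bs{\mathfrak{s}}))\bigr)_j \;=\; \tfrac{1}{4}\sum_{i=n+1}^{\infty} s_i^{\,j}\,\ell^{i-1}_\Psi.
\end{equation*}
Taking absolute values and using $|s_i^{\,j}|=1$, the task reduces to bounding $\sum_{i=n+1}^{\infty} \ell^{i-1}_\Psi = \sum_{k=n}^\infty \ell^k_\Psi$. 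Applying the halving inequality $\ell^{k+1}_\Psi \leq \ell^k_\Psi/2$ from the lengths section iteratively gives a geometric bound $\sum_{k=n}^\infty \ell^k_\Psi \leq 2\,\ell^n_\Psi$, and hence $\big|\big(\mathcal{P}_\Psi(\bs{\mathfrak{s}}) - P^n_\Psi(\sigma_n(\bs{\mathfrak{s}}))\big)_j\big| \leq \tfrac{1}{2}\ell^n_\Psi$ for every coordinate $j$, which is exactly the condition to lie in the closed cube $\ol{Q}(P^n_\Psi(\sigma_n(\bs{\mathfrak{s}})),\ell^n_\Psi)$. The identical argument applies to $\bs{\mathfrak{r}}$.

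The only subtlety, and what I would call the main (mild) obstacle, is tracking the geometric-series estimate carefully so that the tail does not exceed $\tfrac{1}{2}\ell^n_\Psi$; this relies on the inequality $\ell^{n+1}_\Psi \leq \ell^n_\Psi / 2$ rather than the strict inequality $\psi_i < 1$, so the argument in fact works for the weaker hypothesis $0 < \psi_i \leq 1$ as well. A purely self-contained alternative would be to sidestep Lemma \ref{lemma: disjoint closed cubes} and instead estimate $|(\mathcal{P}_\Psi(\bs{\mathfrak{s}}) - \mathcal{P}_\Psi(\bs{\mathfrak{r}}))_j|$ directly: isolating the first differing term gives a contribution $\tfrac{1}{2}\ell^{n-1}_\Psi$, while the tail is bounded by $\tfrac{1}{2}\sum_{k=n}^\infty \ell^k_\Psi \leq \psi_n \ell^{n-1}_\Psi / 1 < \ell^{n-1}_\Psi$ using $\psi_n < 1$, yielding a strictly positive lower bound on the distance. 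I would present the first approach, since it is cleaner and reuses the preceding lemma.
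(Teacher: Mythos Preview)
Your argument is correct. The paper's own proof is the single sentence ``The proof is similar to the previous lemma,'' which most naturally means adapting the \emph{computation} of Lemma~\ref{lemma: disjoint closed cubes} directly to the infinite series---i.e.\ your alternative route of estimating $|(\mathcal{P}_\Psi(\bs{\mathfrak{s}}) - \mathcal{P}_\Psi(\bs{\mathfrak{r}}))_j|$ by isolating the first differing term. Your primary route instead applies the \emph{result} of Lemma~\ref{lemma: disjoint closed cubes} after establishing the containment $\mathcal{P}_\Psi(\bs{\mathfrak{s}}) \in \ol{Q}(P^n_\Psi(\sigma_n(\bs{\mathfrak{s}})),\ell^n_\Psi)$; this containment is exactly Lemma~\ref{lemma: seq lie in every cube}, which the paper proves two lemmas later with the same tail-sum estimate you give. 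So the two approaches differ only in packaging: yours is more modular and anticipates a later lemma, while the paper's is a one-line reference to the same underlying geometric-series bound.

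One small slip in your alternative sketch: the tail must be compared to $\tfrac{1}{2}\ell^{n-1}_\Psi$, not $\ell^{n-1}_\Psi$. The correct chain is $\tfrac{1}{2}\sum_{k\ge n}\ell^k_\Psi \le \ell^n_\Psi = \tfrac{\psi_n}{2}\ell^{n-1}_\Psi < \tfrac{1}{2}\ell^{n-1}_\Psi$, which then gives the strictly positive lower bound $\tfrac{1}{2}(1-\psi_n)\ell^{n-1}_\Psi$. Your primary approach has no such issue.
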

\begin{proof}
    The proof is similar to the previous lemma.
\end{proof}
\begin{lemma}
Let $\Theta$ be a sequence of $1's$ as defined in (\ref{def: Theta seq of 1s}). Let $\mathfrak{s}$ and $\mathfrak{r}$ be two different elements of $S_n$ then the open cubes $Q(P^n_\Theta(\mathfrak{s}), \ell^n_\Theta)$ and $Q(P^n_\Theta(\mathfrak{r}), \ell^n_\Theta)$ are disjoint.
\label{lemma: disjoint open cubes}
\end{lemma}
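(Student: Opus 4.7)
The plan is to adapt the argument of Lemma \ref{lemma: disjoint closed cubes}, but with a twist: for the generic sequence $\Psi$ with $0 < \psi_i < 1$, the slack $\ell^{n-1}_\Psi - 2\ell^n_\Psi > 0$ gave room to separate closed cubes. For $\Theta$ we have the borderline relation $\ell^{n-1}_\Theta = 2 \ell^n_\Theta$, so closed dyadic cubes can share boundary points; consequently the statement has to be relaxed to \emph{open} cubes, and the proof must track the cumulative contribution of \emph{all} subsequent indices, not just the first index of disagreement.

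First I would identify the first coordinate of disagreement. Write $\mathfrak{s} = (\bs{s}_1, \ldots, \bs{s}_n)$ and $\mathfrak{r} = (\bs{r}_1, \ldots, \bs{r}_n)$, let $i_0$ be the smallest index with $\bs{s}_{i_0} \neq \bs{r}_{i_0}$, and pick $1 \leq j \leq d$ with $s^j_{i_0} \neq r^j_{i_0}$, so that $|s^j_{i_0} - r^j_{i_0}| = 2$. Since $\ell^{i-1}_\Theta = 1/2^{i-1}$, the definition (\ref{def: Pn Psi new}) gives
\begin{equation}
    \bigl(P^n_\Theta(\mathfrak{s}) - P^n_\Theta(\mathfrak{r})\bigr)_j \;=\; \frac{1}{4} \sum_{i = i_0}^{n} \frac{s^j_i - r^j_i}{2^{i-1}}. \nonumber
\end{equation}
The $i = i_0$ term contributes $\pm 1/2^{i_0}$ in absolute value, while the remaining terms are bounded by
\begin{equation}
    \frac{1}{4} \sum_{i = i_0+1}^{n} \frac{|s^j_i - r^j_i|}{2^{i-1}} \;\leq\; \frac{1}{2} \sum_{i = i_0+1}^{n} \frac{1}{2^{i-1}} \;=\; \frac{1}{2^{i_0}} - \frac{1}{2^{n}}. \nonumber
\end{equation}
By the reverse triangle inequality,
\begin{equation}
    \bigl|\bigl(P^n_\Theta(\mathfrak{s}) - P^n_\Theta(\mathfrak{r})\bigr)_j\bigr| \;\geq\; \frac{1}{2^{i_0}} - \Bigl(\frac{1}{2^{i_0}} - \frac{1}{2^{n}}\Bigr) \;=\; \frac{1}{2^{n}} \;=\; \ell^n_\Theta. \nonumber
\end{equation}

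Finally, I would conclude by contradiction. If some $\bs{x}$ lay in both open cubes, then $|(P^n_\Theta(\mathfrak{s}) - \bs{x})_j| < \ell^n_\Theta/2$ and $|(P^n_\Theta(\mathfrak{r}) - \bs{x})_j| < \ell^n_\Theta/2$, and the ordinary triangle inequality would yield $|(P^n_\Theta(\mathfrak{s}) - P^n_\Theta(\mathfrak{r}))_j| < \ell^n_\Theta$, contradicting the bound above. Hence the open cubes are disjoint. The only real obstacle is the careful geometric-series bookkeeping above; unlike in Lemma \ref{lemma: disjoint closed cubes}, one cannot discard the tail terms, since the $\Theta$-lengths collapse exactly by a factor of two at each stage and the inequality is saturated.
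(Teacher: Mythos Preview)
Your proof is correct and follows essentially the same approach as the paper: both adapt the argument of Lemma~\ref{lemma: disjoint closed cubes} by replacing the closed-cube condition $|(P^n_\Theta(\mathfrak{s}) - \bs{x})_j| \leq \ell^n_\Theta/2$ with the strict inequality $< \ell^n_\Theta/2$ appropriate to open cubes, so that the borderline center separation $\geq \ell^n_\Theta$ suffices. The paper's proof is a one-line reference back to Lemma~\ref{lemma: disjoint closed cubes}, whereas you spell out the geometric-series bookkeeping that justifies the center-distance bound $|(P^n_\Theta(\mathfrak{s}) - P^n_\Theta(\mathfrak{r}))_j| \geq \ell^n_\Theta$; this extra care is welcome, since for $\Theta$ the lengths halve exactly and the bound is saturated.
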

\begin{proof}
The proof works the same as in Lemma \ref{lemma: disjoint closed cubes}. The only difference is that instead of (\ref{eqn: inside close cube cond}), for $\bs{x}$ to be in $Q(P^n_\Theta(\mathfrak{s}), \ell^n_\Theta)$, we need
\begin{eqnarray}
        \left|\left(P^n_\Theta(\mathfrak{s}) - \bs{x}\right)_j\right| < \frac{\ell^n_\Theta}{2}. 
\end{eqnarray}
\end{proof}

\begin{lemma}
Let $\Psi = \{\psi_i\}_{i=1}^\infty$ be a sequence with $0 < \psi_i \leq 1$ and let $\mathfrak{s} = (\bs{s}_1, \dots \bs{s}_n) \in S_n$ for some $n \geq 2$ and $\mathfrak{s}^\prime = (\bs{s}_1, \dots \bs{s}_{n-1})$. Then $\ol{Q}(P^n_\Psi(\mathfrak{s}), \ell^n_\Psi) \subset \ol{Q}(P^{n-1}_\Psi(\mathfrak{s}^\prime), \ell^{n-1}_\Psi)$.
\label{lemma: nested cubes}
\end{lemma}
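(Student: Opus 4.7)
The plan is to prove the inclusion by a direct componentwise triangle inequality, using the explicit formula for $P^n_\Psi$ from (\ref{def: Pn Psi new}) together with the already-noted halving property $\ell^n_\Psi \leq \ell^{n-1}_\Psi/2$.

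First I would compute the displacement between the two cube centers. Subtracting the formulas for $P^n_\Psi(\mathfrak{s})$ and $P^{n-1}_\Psi(\mathfrak{s}')$ termwise, all but the last term in each coordinate cancels, leaving
\begin{eqnarray}
    \left(P^n_\Psi(\mathfrak{s}) - P^{n-1}_\Psi(\mathfrak{s}')\right)_j = \frac{1}{4}\, s^j_n\, \ell^{n-1}_\Psi \nonumber
\end{eqnarray}
for each $1 \leq j \leq d$. Since $s^j_n \in \{-1, 1\}$, the absolute value in each coordinate is exactly $\ell^{n-1}_\Psi/4$.

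Next, take an arbitrary $\bs{x} \in \ol{Q}(P^n_\Psi(\mathfrak{s}), \ell^n_\Psi)$, so $|(\bs{x} - P^n_\Psi(\mathfrak{s}))_j| \leq \ell^n_\Psi/2$ for every coordinate $j$. Applying the triangle inequality componentwise,
\begin{eqnarray}
    \left|\left(\bs{x} - P^{n-1}_\Psi(\mathfrak{s}')\right)_j\right| \leq \left|\left(\bs{x} - P^n_\Psi(\mathfrak{s})\right)_j\right| + \left|\left(P^n_\Psi(\mathfrak{s}) - P^{n-1}_\Psi(\mathfrak{s}')\right)_j\right| \leq \frac{\ell^n_\Psi}{2} + \frac{\ell^{n-1}_\Psi}{4}. \nonumber
\end{eqnarray}
Using $\ell^n_\Psi \leq \ell^{n-1}_\Psi/2$ (which holds because $0 < \psi_n \leq 1$, as already observed), the right-hand side is bounded by $\ell^{n-1}_\Psi/4 + \ell^{n-1}_\Psi/4 = \ell^{n-1}_\Psi/2$. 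Hence $\bs{x} \in \ol{Q}(P^{n-1}_\Psi(\mathfrak{s}'), \ell^{n-1}_\Psi)$, which establishes the claimed inclusion.

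There is no real obstacle here — the argument is almost identical in spirit to the one used in Lemma \ref{lemma: disjoint closed cubes}, just with the triangle inequality oriented in the opposite direction (summing rather than subtracting) and relying on $\psi_n \leq 1$ rather than the strict inequality $\psi_i < 1$. The only minor point to be careful about is to invoke the bound $\ell^n_\Psi \leq \ell^{n-1}_\Psi/2$ rather than any strict version, which is precisely why the hypothesis here is $0 < \psi_i \leq 1$ and the statement concerns closed (rather than open) cubes.
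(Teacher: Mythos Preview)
Your proof is correct and essentially identical to the paper's: both take an arbitrary $\bs{x}$ in the smaller cube, apply the componentwise triangle inequality via the intermediate point $P^n_\Psi(\mathfrak{s})$, and use $|(P^n_\Psi(\mathfrak{s})-P^{n-1}_\Psi(\mathfrak{s}'))_j|=\ell^{n-1}_\Psi/4$ together with $\ell^n_\Psi \leq \ell^{n-1}_\Psi/2$ to conclude. The only difference is cosmetic --- you compute the center displacement explicitly from (\ref{def: Pn Psi new}), whereas the paper just asserts the bound.
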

\begin{proof}
    Suppose $\bs{x} \in \ol{Q}(P^n_\Psi(\mathfrak{s}), \ell^n_\Psi)$. This means
    \begin{eqnarray}
    \left|\left(P^n_\Psi(\mathfrak{s}) - \bs{x}\right)_j\right| \leq \frac{\ell^n_\Psi}{2}, \nonumber
\end{eqnarray}
where $(\;\cdot\;)_j$ denotes the $j$-th component. A straightforward calculation shows that
\begin{eqnarray}
 \left|\left(P^{n-1}_\Psi(\mathfrak{s}^\prime) - \bs{x}\right)_j\right| \leq \left|\left(P^{n-1}_\Psi(\mathfrak{s}^\prime) - P^n_\Psi(\mathfrak{s})\right)_j\right| + \left|\left(P^n_\Psi(\mathfrak{s}) - \bs{x}\right)_j\right| \leq \frac{\ell^{n-1}_\Psi}{4} + \frac{\ell^n_\Psi}{2} \leq \frac{\ell^{n-1}_\Psi}{2}. \nonumber
\end{eqnarray}
This implies $\bs{x} \in \ol{Q}(P^{n-1}_\Psi(\mathfrak{s}^\prime), \ell^{n-1}_\Psi)$. Hence, $\ol{Q}(P^n_\Psi(\mathfrak{s}), \ell^n_\Psi) \subset \ol{Q}(P^{n-1}_\Psi(\mathfrak{s}^\prime), \ell^{n-1}_\Psi)$ is true.
\end{proof}

\begin{lemma}
Let $\bs{\mathfrak{s}} \in \mathcal{S}$. Then for every $n \in \mathbb{N}$, $\mathcal{P}_\Psi(\bs{\mathfrak{s}}) \in \ol{Q}(P^n_\Psi(\sigma_n(\bs{\mathfrak{s}})), \ell^n_\Psi)$.
\label{lemma: seq lie in every cube}
\end{lemma}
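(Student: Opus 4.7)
The plan is to prove the containment by a direct component-by-component computation of the difference $\mathcal{P}_\Psi(\bs{\mathfrak{s}}) - P^n_\Psi(\sigma_n(\bs{\mathfrak{s}}))$ and bounding each coordinate in absolute value by $\ell^n_\Psi/2$, which is exactly the membership condition for the closed cube $\ol{Q}(P^n_\Psi(\sigma_n(\bs{\mathfrak{s}})), \ell^n_\Psi)$.

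First I would unfold the definitions. Writing $\bs{\mathfrak{s}} = (\bs{s}_1, \bs{s}_2, \dots)$ with $\bs{s}_i = (s_i^1, \dots, s_i^d)$, the $j$-th coordinate of the difference is
\begin{eqnarray}
\left(\mathcal{P}_\Psi(\bs{\mathfrak{s}}) - P^n_\Psi(\sigma_n(\bs{\mathfrak{s}}))\right)_j = \frac{1}{4}\sum_{i = n+1}^{\infty} s_i^j \, \ell^{i-1}_\Psi. \nonumber
\end{eqnarray}
Since $|s_i^j| = 1$, the triangle inequality reduces the problem to estimating $\sum_{i \geq n+1} \ell^{i-1}_\Psi$.

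Next I would use the defining relation $\ell^m_\Psi = \psi_m \ell^{m-1}_\Psi/2 \leq \ell^{m-1}_\Psi/2$, which holds because $0 < \psi_m \leq 1$. Iterating this from $m = n+1$ upward gives $\ell^{i-1}_\Psi \leq \ell^n_\Psi/2^{\,i-1-n}$ for every $i \geq n+1$. Summing the resulting geometric series yields
\begin{eqnarray}
\sum_{i = n+1}^{\infty} \ell^{i-1}_\Psi \;\leq\; \ell^n_\Psi \sum_{k=0}^{\infty} \frac{1}{2^k} \;=\; 2\,\ell^n_\Psi, \nonumber
\end{eqnarray}
and hence $\left|\left(\mathcal{P}_\Psi(\bs{\mathfrak{s}}) - P^n_\Psi(\sigma_n(\bs{\mathfrak{s}}))\right)_j\right| \leq \frac{1}{4}\cdot 2\ell^n_\Psi = \frac{\ell^n_\Psi}{2}$ for every $j \in \{1, \dots, d\}$, which is precisely the condition $\mathcal{P}_\Psi(\bs{\mathfrak{s}}) \in \ol{Q}(P^n_\Psi(\sigma_n(\bs{\mathfrak{s}})), \ell^n_\Psi)$.

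There is no real obstacle here; the statement is essentially a telescoping/geometric-series estimate encoding the fact that the tail contribution to $\mathcal{P}_\Psi(\bs{\mathfrak{s}})$ after truncating at generation $n$ is controlled by the size of the $n$-th generation cube. The only point that requires minor care is ensuring the inequality $\ell^{i-1}_\Psi \leq \ell^n_\Psi/2^{\,i-1-n}$ uses $\psi_i \leq 1$ rather than the strict inequality $\psi_i < 1$ used in Lemma \ref{lemma: disjoint closed cubes}, so the hypothesis $0 < \psi_i \leq 1$ is exactly what is needed (and incidentally the bound is automatically attained when $\Psi = \Theta$, consistent with $\mathcal{P}_\Theta$ sitting on the boundary of dyadic cubes).
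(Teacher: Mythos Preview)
Your proof is correct and follows essentially the same approach as the paper: compute the $j$-th coordinate of the difference as the tail sum $\frac{1}{4}\sum_{i \geq n+1}\ell^{i-1}_\Psi$, bound $\ell^{i-1}_\Psi \leq \ell^n_\Psi/2^{i-1-n}$ using $\psi_i \leq 1$, and sum the geometric series to obtain $\ell^n_\Psi/2$. The paper's proof is identical in substance, just more tersely written.
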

\begin{proof}
For any $j \in \{1, \dots, d\}$, we note that
\begin{eqnarray}
   \left|\left(\mathcal{P}_\Psi(\bs{\mathfrak{s}}) - P^n_\Psi(\mathfrak{s})\right)_j\right| \leq \frac{1}{4}  \sum_{i = n + 1}^{\infty} \ell^{i-1}_\Psi \leq \frac{1}{4}  \sum_{i = n + 1}^{\infty} \frac{\ell^{n}_\Psi}{2^{i-n-1}} \leq \frac{\ell^{n}_\Psi}{2}. \nonumber
\end{eqnarray}
\end{proof}
\begin{corollary}
Let $\bs{\mathfrak{s}} \in \mathcal{S}$. Then  $\mathcal{P}_\Psi(\bs{\mathfrak{s}}) = \bigcap\limits_{n \in \mathbb{N}} \ol{Q}(P^n_\Psi(\sigma_n(\bs{\mathfrak{s}})), \ell^n_\Psi)$.
\label{lemma: another rep. of P psi s}
\end{corollary}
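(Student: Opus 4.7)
The plan is to prove the two inclusions separately. The inclusion $\mathcal{P}_\Psi(\bs{\mathfrak{s}}) \in \bigcap_{n \in \mathbb{N}} \ol{Q}(P^n_\Psi(\sigma_n(\bs{\mathfrak{s}})), \ell^n_\Psi)$ is immediate from Lemma \ref{lemma: seq lie in every cube}, which asserts exactly that $\mathcal{P}_\Psi(\bs{\mathfrak{s}})$ lies in each of these cubes. So the only content to verify is that the intersection on the right-hand side contains no other points.

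For the reverse inclusion, I would first observe that the family $\{\ol{Q}(P^n_\Psi(\sigma_n(\bs{\mathfrak{s}})), \ell^n_\Psi)\}_{n \in \mathbb{N}}$ is a nested decreasing sequence of nonempty compact sets. Nestedness follows from Lemma \ref{lemma: nested cubes} applied with $\mathfrak{s} = \sigma_n(\bs{\mathfrak{s}})$, since by the definition (\ref{def: prime notation mathfrak s}) we have $\sigma_n(\bs{\mathfrak{s}})' = \sigma_{n-1}(\bs{\mathfrak{s}})$, so $\ol{Q}(P^n_\Psi(\sigma_n(\bs{\mathfrak{s}})), \ell^n_\Psi) \subset \ol{Q}(P^{n-1}_\Psi(\sigma_{n-1}(\bs{\mathfrak{s}})), \ell^{n-1}_\Psi)$.

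Next, I would use the recursion $\ell^n_\Psi \leq \ell^{n-1}_\Psi/2$ (noted right after the definition of $\ell^n_\Psi$) to conclude that the diameters of these closed cubes tend to zero, since the diameter is at most $\sqrt{d}\, \ell^n_\Psi \leq \sqrt{d}/2^n$. Cantor's intersection theorem in the compact metric space $\mathbb{T}^d$ then forces the intersection to consist of a single point. Combining with the first inclusion, that single point must be $\mathcal{P}_\Psi(\bs{\mathfrak{s}})$, which proves the corollary.

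There is essentially no obstacle here; the statement is a standard consequence of the previous two lemmas plus the elementary estimate on $\ell^n_\Psi$. The only mild care needed is to correctly identify $\sigma_n(\bs{\mathfrak{s}})' = \sigma_{n-1}(\bs{\mathfrak{s}})$ so that Lemma \ref{lemma: nested cubes} applies to produce the nested tower of cubes indexed by the truncations of the same infinite sequence $\bs{\mathfrak{s}}$.
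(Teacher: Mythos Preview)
Your proposal is correct and follows essentially the same approach as the paper: both use Lemma~\ref{lemma: nested cubes} for nestedness, the decay $\ell^n_\Psi \to 0$ to force the intersection to be a single point, and Lemma~\ref{lemma: seq lie in every cube} to identify that point as $\mathcal{P}_\Psi(\bs{\mathfrak{s}})$. Your write-up is in fact slightly more careful in spelling out $\sigma_n(\bs{\mathfrak{s}})' = \sigma_{n-1}(\bs{\mathfrak{s}})$ and naming Cantor's intersection theorem.
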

\begin{proof}
From Lemma \ref{lemma: nested cubes}, $\ol{Q}(P^n_\Psi(\sigma_n(\bs{\mathfrak{s}})), \ell^n_\Psi)$ forms a nested sequence of cubes, i.e.,
\begin{eqnarray}
    \ol{Q}(P^1_\Psi(\sigma_1(\bs{\mathfrak{s}})), \ell^1_\Psi) \supset \ol{Q}(P^2_\Psi(\sigma_2(\bs{\mathfrak{s}})), \ell^2_\Psi) \supset \dots. \nonumber
\end{eqnarray}
Also, the size of the cube $\ol{Q}(P^n_\Psi(\sigma_n(\bs{\mathfrak{s}})), \ell^n_\Psi)$ goes to zero as $n \to \infty$. Therefore, $\bigcap\limits_{n \in \mathbb{N}} \ol{Q}(P^n_\Psi(\sigma_n(\bs{\mathfrak{s}})), \ell^n_\Psi)$ contains only one single point from $\mathbb{T}^d$. From Lemma \ref{lemma: seq lie in every cube}, we note that $\mathcal{P}_\Psi(\bs{\mathfrak{s}}) \in \bigcap\limits_{n \in \mathbb{N}} \ol{Q}(P^n_\Psi(\sigma_n(\bs{\mathfrak{s}})), \ell^n_\Psi)$, which finishes the proof.
\end{proof}
\begin{lemma}
Let $\mathfrak{s} \in S_n$, then $\ol{Q}(P^n_\Theta(\mathfrak{s}), \ell^n_\Theta) = \bigcup\limits_{\substack{\mathfrak{r} \in S_{n+1} \\ \mathfrak{r}^\prime = \mathfrak{s} }} \ol{Q}(P^{n+1}_\Theta(\mathfrak{r}), \ell^{n+1}_\Theta)$. Furthermore, $\mathbb{T}^d = \bigcup\limits_{\mathfrak{s} \in S_n}\ol{Q}(P^n_\Theta(\mathfrak{s}), \ell^n_\Theta)$ for any $n \in \mathbb{N}$.
\label{lemma: Torus union of dyadic cubes}
\end{lemma}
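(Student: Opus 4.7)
The plan is to prove the two assertions separately, with the first (subdivision identity) feeding directly into an induction that gives the second (tiling of $\mathbb{T}^d$).

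For the subdivision identity $\ol{Q}(P^n_\Theta(\mathfrak{s}), \ell^n_\Theta) = \bigcup_{\mathfrak{r}^\prime = \mathfrak{s}} \ol{Q}(P^{n+1}_\Theta(\mathfrak{r}), \ell^{n+1}_\Theta)$, I would handle the two inclusions separately. The inclusion $\supset$ is immediate from Lemma \ref{lemma: nested cubes} applied with $\Psi = \Theta$: any $\mathfrak{r} \in S_{n+1}$ with $\mathfrak{r}^\prime = \mathfrak{s}$ gives $\ol{Q}(P^{n+1}_\Theta(\mathfrak{r}), \ell^{n+1}_\Theta) \subset \ol{Q}(P^n_\Theta(\mathfrak{s}), \ell^n_\Theta)$. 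For $\subset$, I would take $\bs{x} \in \ol{Q}(P^n_\Theta(\mathfrak{s}), \ell^n_\Theta)$ and \emph{construct} the required extension $\bs{s}_{n+1} = (s_{n+1}^1, \dots, s_{n+1}^d) \in \mathbb{I}^d$ componentwise: set $s_{n+1}^j = 1$ if $x_j \geq (P^n_\Theta(\mathfrak{s}))_j$ and $s_{n+1}^j = -1$ otherwise. Writing $\mathfrak{r} = (\bs{s}_1, \dots, \bs{s}_n, \bs{s}_{n+1})$, the formula \eqref{def: Pn Psi new} yields the recursion
\begin{equation*}
(P^{n+1}_\Theta(\mathfrak{r}))_j = (P^n_\Theta(\mathfrak{s}))_j + \tfrac{1}{4}\, s_{n+1}^j\, \ell^n_\Theta = (P^n_\Theta(\mathfrak{s}))_j + \tfrac{s_{n+1}^j}{2^{n+2}}.
\end{equation*}
Setting $a_j = x_j - (P^n_\Theta(\mathfrak{s}))_j$, one has $|a_j| \leq \ell^n_\Theta/2 = 1/2^{n+1}$, and by the sign choice $a_j$ has the same sign as $s_{n+1}^j$. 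A direct check shows $|x_j - (P^{n+1}_\Theta(\mathfrak{r}))_j| = |a_j - s_{n+1}^j/2^{n+2}| \leq 1/2^{n+2} = \ell^{n+1}_\Theta/2$, placing $\bs{x}$ in $\ol{Q}(P^{n+1}_\Theta(\mathfrak{r}), \ell^{n+1}_\Theta)$.

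For the tiling statement $\mathbb{T}^d = \bigcup_{\mathfrak{s} \in S_n} \ol{Q}(P^n_\Theta(\mathfrak{s}), \ell^n_\Theta)$, I would proceed by induction on $n$. The base case $n = 1$ is the direct verification that the $2^d$ closed cubes of side $1/2$ centered at the points $(1/2 + s^1/4, \dots, 1/2 + s^d/4)$ with $s^j \in \{\pm 1\}$ cover $\mathbb{T}^d = \mathbb{R}^d/\mathbb{Z}^d$, since along each coordinate the two intervals $[0,1/2]$ and $[1/2,1]$ tile the unit circle. The inductive step is then immediate by applying the subdivision identity already proved:
\begin{equation*}
\mathbb{T}^d = \bigcup_{\mathfrak{s} \in S_n} \ol{Q}(P^n_\Theta(\mathfrak{s}), \ell^n_\Theta) = \bigcup_{\mathfrak{s} \in S_n} \bigcup_{\substack{\mathfrak{r} \in S_{n+1} \\ \mathfrak{r}^\prime = \mathfrak{s}}} \ol{Q}(P^{n+1}_\Theta(\mathfrak{r}), \ell^{n+1}_\Theta) = \bigcup_{\mathfrak{r} \in S_{n+1}} \ol{Q}(P^{n+1}_\Theta(\mathfrak{r}), \ell^{n+1}_\Theta).
\end{equation*}

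There is no real obstacle here; the statement is the standard dyadic-subdivision fact, and the only thing that needs attention is the bookkeeping of the factor $1/4$ in \eqref{def: Pn Psi new}, which is what makes the shift $s_{n+1}^j \ell^n_\Theta / 4$ match exactly the half-width $\ell^{n+1}_\Theta/2$ of a generation-$(n+1)$ cube. Once that is checked, both inclusions and the induction go through by direct computation.
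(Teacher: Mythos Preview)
Your proof is correct; the paper itself does not give a proof at all, writing only ``These are standard properties in a dyadic decomposition.'' Your argument is simply the standard verification spelled out in full, so there is nothing to compare.
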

\begin{proof}
    These are standard properties in a dyadic decomposition.
\end{proof}
\begin{lemma}
For every $\bs{x} \in \mathbb{T}^d$, $\exists \; \bs{\mathfrak{s}} \in \mathcal{S}$ such that $\bs{x} = \mathcal{P}_\Theta(\bs{\mathfrak{s}})$.
\label{lemma: Torus d s}
\end{lemma}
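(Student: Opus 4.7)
The plan is to construct the sequence $\bs{\mathfrak{s}}$ inductively by exploiting the dyadic decomposition property established in Lemma \ref{lemma: Torus union of dyadic cubes}, and then identify the resulting point via Corollary \ref{lemma: another rep. of P psi s}.

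Specifically, fix $\bs{x} \in \mathbb{T}^d$. By Lemma \ref{lemma: Torus union of dyadic cubes} applied with $n=1$, we have $\mathbb{T}^d = \bigcup_{\bs{s}_1 \in \mathbb{I}^d} \ol{Q}(P^1_\Theta((\bs{s}_1)), \ell^1_\Theta)$, so there exists at least one $\bs{s}_1 \in \mathbb{I}^d$ with $\bs{x} \in \ol{Q}(P^1_\Theta((\bs{s}_1)), \ell^1_\Theta)$ (if $\bs{x}$ lies on a face shared by several cubes we pick any one of them; we only need existence, not uniqueness). Now suppose we have already selected $\bs{s}_1, \dots, \bs{s}_n$ such that, setting $\mathfrak{s}_n = (\bs{s}_1,\dots,\bs{s}_n) \in S_n$, one has $\bs{x} \in \ol{Q}(P^n_\Theta(\mathfrak{s}_n), \ell^n_\Theta)$. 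By the first part of Lemma \ref{lemma: Torus union of dyadic cubes},
\begin{eqnarray}
\ol{Q}(P^n_\Theta(\mathfrak{s}_n), \ell^n_\Theta) = \bigcup_{\substack{\mathfrak{r} \in S_{n+1} \\ \mathfrak{r}^\prime = \mathfrak{s}_n}} \ol{Q}(P^{n+1}_\Theta(\mathfrak{r}), \ell^{n+1}_\Theta), \nonumber
\end{eqnarray}
so we can pick $\bs{s}_{n+1} \in \mathbb{I}^d$ such that the cube labeled by $\mathfrak{s}_{n+1} = (\bs{s}_1,\dots,\bs{s}_{n+1})$ still contains $\bs{x}$.

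Carrying out this induction for every $n \in \mathbb{N}$ produces a sequence $\bs{\mathfrak{s}} = (\bs{s}_1, \bs{s}_2, \dots) \in \mathcal{S}$ such that $\bs{x} \in \ol{Q}(P^n_\Theta(\sigma_n(\bs{\mathfrak{s}})), \ell^n_\Theta)$ for every $n$. Applying Corollary \ref{lemma: another rep. of P psi s} with $\Psi = \Theta$ then yields
\begin{eqnarray}
\mathcal{P}_\Theta(\bs{\mathfrak{s}}) = \bigcap_{n \in \mathbb{N}} \ol{Q}(P^n_\Theta(\sigma_n(\bs{\mathfrak{s}})), \ell^n_\Theta), \nonumber
\end{eqnarray}
and since $\bs{x}$ lies in every cube of this nested intersection, which contains only a single point, we conclude $\bs{x} = \mathcal{P}_\Theta(\bs{\mathfrak{s}})$.

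There is essentially no obstacle here, as all the substantive work is packaged into the preceding lemmas; the one subtlety is the non-uniqueness of the choice of $\bs{s}_n$ when $\bs{x}$ sits on the common boundary of neighboring closed dyadic cubes (analogous to the $0.0111\dots = 0.1000\dots$ ambiguity of binary expansions), but this has no impact on the existence claim and we simply fix any one valid selection at each step.
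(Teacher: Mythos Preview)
Your proof is correct and follows essentially the same approach as the paper: an inductive selection of nested closed dyadic cubes containing $\bs{x}$ via Lemma~\ref{lemma: Torus union of dyadic cubes}, followed by identification of the intersection point through Corollary~\ref{lemma: another rep. of P psi s} applied with $\Psi = \Theta$. The only cosmetic difference is that the paper builds tuples $\mathfrak{s}_n \in S_n$ and then extracts their last components, whereas you directly record the new component $\bs{s}_{n+1}$ at each step.
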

\begin{proof}
    From Lemma \ref{lemma: Torus union of dyadic cubes}, there exist $\mathfrak{s}_1 \in S_1$ such that $\bs{x} \in \ol{Q}(P^1_\Theta(\mathfrak{s}_1), \ell^1_\Theta)$. Having selected $\mathfrak{s}_i \in S_i$ for $i \geq 1$ such that $\bs{x} \in \ol{Q}(P^i_\Theta(\mathfrak{s}_i), \ell^i_\Theta)$, again using Lemma \ref{lemma: Torus union of dyadic cubes}, we choose $\mathfrak{s}_{i+1} \in S_{i+1}$ such that $\mathfrak{s}_{i+1}^\prime = \mathfrak{s}_i$ and $\bs{x} \in \ol{Q}(P^{i+1}_\Theta(\mathfrak{s}_{i+1}), \ell^{i+1}_\Theta)$. Finally, we define $\bs{\mathfrak{s}} \in \mathcal{S}$ as follows. We let the $n$th component of $\bs{\mathfrak{s}}$ to be the $n$th component of $\mathfrak{s}_n$, i.e., $\bs{s}_n = \bs{s}_{n, n}$. By construction $\bs{x} = \bigcap\limits_{n \in \mathbb{N}} \ol{Q}(P^n_\Psi(\sigma_n(\bs{\mathfrak{s}})), \ell^n_\Psi)$. Finally, referring to Corollary \ref{lemma: another rep. of P psi s} finishes the proof.
\end{proof}

\noindent
\textbf{Notation 4.} 
If $\Psi = \{\psi_i\}_{i = 1}^\infty$ is such that $0 < \psi_i < 1$, then we define
\begin{eqnarray}
    C^n_\Psi \coloneqq \bigcup\limits_{\mathfrak{s} \in S_n} \overline{Q}(P^n_\Psi(\mathfrak{s}), \ell^n_\Psi).
\end{eqnarray}
The set $C^n_\Psi$ is the union of $n$th generation cubes in the Cantor set construction process. From Lemma \ref{lemma: nested cubes}, we note that $C^n_\Psi$'s form a nested sequence of sets, i.e.,
\begin{eqnarray}
    C^1_{\Psi} \supset C^{2}_{\Psi} \supset C^{3}_{\Psi} \dots.
\end{eqnarray}
Finally, we define a Cantor set corresponding to the sequence $\Psi$ as
\begin{eqnarray}
    \mathcal{C}_\Psi \coloneqq \bigcap\limits_{i = 1}^{\infty} C^n_\Psi.
\label{def: Cantor set}
\end{eqnarray}
From the definition (\ref{def: Cantor set}) and Corollary \ref{lemma: another rep. of P psi s}, we immediately see that for any $\bs{\mathfrak{s}} \in \mathcal{S}$, $\mathcal{P}_\Psi(\bs{\mathfrak{s}}) \in \mathcal{C}_\Psi$. Moreover, the following lemma states that any $\bs{x} \in \mathcal{C}_\Psi$ can be represented this way.



\begin{lemma}
Let $\Psi = \{\psi_i\}_{i=1}^\infty$ be a sequence with $0 < \psi_i < 1$, then for every $\bs{x} \in \mathcal{C}_\Psi$, $\exists! \bs{\mathfrak{s}} \in \mathcal{S}$ such that $\bs{x} = \mathcal{P}_\Psi(\bs{\mathfrak{s}})$.
\label{lemma: C Psi unique s}
\end{lemma}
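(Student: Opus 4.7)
The plan is to establish existence and uniqueness separately, with uniqueness being an immediate corollary of Lemma \ref{lemma: not equal infinite seq points} and the bulk of the work going into existence.

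For uniqueness, suppose $\mathcal{P}_\Psi(\bs{\mathfrak{s}}) = \mathcal{P}_\Psi(\bs{\mathfrak{r}}) = \bs{x}$ for two sequences $\bs{\mathfrak{s}}, \bs{\mathfrak{r}} \in \mathcal{S}$. Since $0 < \psi_i < 1$, Lemma \ref{lemma: not equal infinite seq points} directly forces $\bs{\mathfrak{s}} = \bs{\mathfrak{r}}$, so this half of the claim is essentially free.

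For existence, fix $\bs{x} \in \mathcal{C}_\Psi = \bigcap_{n \in \mathbb{N}} C^n_\Psi$. For each $n$, because $\bs{x} \in C^n_\Psi$, there is at least one $\mathfrak{s}_n \in S_n$ with $\bs{x} \in \ol{Q}(P^n_\Psi(\mathfrak{s}_n), \ell^n_\Psi)$. Lemma \ref{lemma: disjoint closed cubes} (whose hypothesis $0 < \psi_i < 1$ is exactly our standing assumption) guarantees that the closed cubes at generation $n$ are pairwise disjoint, so $\mathfrak{s}_n$ is in fact unique. The next step, which I view as the only nontrivial point, is to verify that these $\mathfrak{s}_n$ assemble consistently, namely $\mathfrak{s}_{n+1}^\prime = \mathfrak{s}_n$ in the sense of (\ref{def: prime notation mathfrak s}). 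This uses Lemma \ref{lemma: nested cubes}: the cube $\ol{Q}(P^{n+1}_\Psi(\mathfrak{s}_{n+1}), \ell^{n+1}_\Psi)$ is contained in $\ol{Q}(P^n_\Psi(\mathfrak{s}_{n+1}^\prime), \ell^n_\Psi)$, so $\bs{x}$ also lies in the latter cube, and by the uniqueness just established we must have $\mathfrak{s}_{n+1}^\prime = \mathfrak{s}_n$.

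Once this consistency is in hand, I define $\bs{\mathfrak{s}} \in \mathcal{S}$ by letting $\bs{s}_n$ be the last coordinate of $\mathfrak{s}_n$; the consistency ensures $\sigma_n(\bs{\mathfrak{s}}) = \mathfrak{s}_n$ for every $n$. Then $\bs{x} \in \ol{Q}(P^n_\Psi(\sigma_n(\bs{\mathfrak{s}})), \ell^n_\Psi)$ for all $n$, so $\bs{x} \in \bigcap_{n \in \mathbb{N}} \ol{Q}(P^n_\Psi(\sigma_n(\bs{\mathfrak{s}})), \ell^n_\Psi)$, and Corollary \ref{lemma: another rep. of P psi s} identifies this intersection with $\{\mathcal{P}_\Psi(\bs{\mathfrak{s}})\}$, giving $\bs{x} = \mathcal{P}_\Psi(\bs{\mathfrak{s}})$ as required. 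The main potential subtlety is making sure that the $\mathfrak{s}_n$ chosen at each level can be organized into a single sequence, but Lemma \ref{lemma: nested cubes} combined with the disjointness from Lemma \ref{lemma: disjoint closed cubes} closes this loop cleanly.
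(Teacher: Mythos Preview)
Your proof is correct and follows essentially the same route as the paper's: uniqueness via Lemma~\ref{lemma: not equal infinite seq points}, existence by selecting the unique $\mathfrak{s}_n$ at each level using Lemma~\ref{lemma: disjoint closed cubes}, checking consistency $\mathfrak{s}_{n+1}^\prime = \mathfrak{s}_n$, and concluding with Corollary~\ref{lemma: another rep. of P psi s}. If anything, your version is slightly more explicit in justifying the consistency step by invoking Lemma~\ref{lemma: nested cubes}, which the paper leaves implicit.
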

\begin{proof}
    We only need to show the existence of $\bs{\mathfrak{s}} \in \mathcal{S}$ as the uniqueness is clear from Lemma \ref{lemma: not equal infinite seq points}. Given $\bs{x} \in \mathcal{C}_\Psi$, we construct an $\bs{\mathfrak{s}} = (\bs{s}_1, \bs{s}_2, \dots) \in \mathcal{S}$ as follows.

    By definition of $\mathcal{C}_\Psi$, for every $n \in \mathbb{N}$ there is $\mathfrak{s}_n \in S_n$ such that $\bs{x} \in \ol{Q}(P^n_\Psi(\mathfrak{s}_n), \ell^n_\Psi)$. Moreover, as the cubes from the $n$th generation in the Cantor set construction process are disjoint from Lemma \ref{lemma: disjoint closed cubes}, this $\mathfrak{s}_n$ is unique. Using (\ref{def: prime notation mathfrak s}) from Notation 2, this also means $\mathfrak{s}_n^\prime = \mathfrak{s}_{n-1}$. Finally, we define the $n$th component of $\bs{\mathfrak{s}}$ as $\bs{s}_n \coloneqq \bs{s}_{n, n}$, where $\bs{s}_{n, n}$ is the $n$th component of $\mathfrak{s}_n$.

    By construction of $\bs{\mathfrak{s}}$, we see that for every $n \in \mathbb{N}$, $\bs{x}  \in \ol{Q}(P^n_\Psi(\sigma_n(\bs{\mathfrak{s}})), \ell^n_\Psi)$. Finally, noting Corollary \ref{lemma: another rep. of P psi s} implies $\bs{x} = \mathcal{P}(\bs{\mathfrak{s}})$.
\end{proof}

An important sequence that we use for the construction of our vector field is
\begin{eqnarray}
    \Phi \coloneqq \{\phi_i\}_{i=1}^\infty \coloneqq \left\{\frac{1}{2^{\nu}}\right\}_{i=1}^\infty \qquad \text{where } \nu \in (0, 1) \text{ is a constant}.
    \label{def: seq Phi}
\end{eqnarray}
From this sequence $\Phi$, we have that
\begin{eqnarray}
   \ell^0_\Phi = 1 \qquad \qquad \text{and} \qquad \qquad \ell^n_\Phi = \frac{1}{2^{(1+\nu)n}} \qquad \text{for} \qquad n \in \mathbb{N}.
\end{eqnarray}
\begin{lemma}
    Let the sequence $\Phi$ be as given in (\ref{def: seq Phi}). Then the Hausdorff dimension of the corresponding Cantor set $\mathcal{C}_\Phi$ is $$\dim_{H} \mathcal{C}_\Phi = \frac{d}{1+\nu}.$$ 
    \label{lemma: Hausdorff dimension}
\end{lemma}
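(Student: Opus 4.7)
The plan is to establish $\dim_H \mathcal{C}_\Phi = s$ with $s \coloneqq d/(1+\nu)$ by proving matching upper and lower bounds. The value of $s$ is exactly the one that makes the natural covering bound balance, reflecting that the $2^{nd}$ generation-$n$ cubes of side $\ell^n_\Phi = 2^{-n(1+\nu)}$ satisfy $2^{nd}(\ell^n_\Phi)^{s} = 1$ for every $n$.

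For the upper bound, I would cover $\mathcal{C}_\Phi$ by the $2^{nd}$ generation-$n$ cubes $\ol{Q}(P^n_\Phi(\mathfrak{s}), \ell^n_\Phi)$, which is valid since $\mathcal{C}_\Phi = \bigcap_n C^n_\Phi$. Each cube has diameter $\sqrt{d}\,\ell^n_\Phi$, so the $s$-dimensional premeasure contribution is $2^{nd}\cdot (\sqrt{d}\,\ell^n_\Phi)^{s} = d^{s/2}$, independent of $n$. Letting $n \to \infty$ sends the diameters to zero, hence $\mathcal{H}^{s}(\mathcal{C}_\Phi) \leq d^{s/2} < \infty$ and $\dim_H \mathcal{C}_\Phi \leq s$.

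For the lower bound, I would apply the mass distribution principle. First, construct a Borel probability measure $\mu$ supported on $\mathcal{C}_\Phi$ by assigning mass $2^{-nd}$ to each generation-$n$ cube; this assignment is consistent across generations (a parent contains $2^d$ children) in view of Lemma \ref{lemma: nested cubes} and extends uniquely by Carath\'eodory. Equivalently, $\mu$ is the pushforward under $\mathcal{P}_\Phi$ of the uniform product measure on $\mathcal{S}$. The goal is then to verify $\mu(B(\bs{x}, r)) \lesssim r^{s}$ for all small $r > 0$. Given such $r$, pick the unique $n \geq 0$ with $\ell^{n+1}_\Phi \leq r < \ell^n_\Phi$, so that $\mu(B(\bs{x}, r))$ is bounded by $2^{-nd}$ times the number of generation-$n$ cubes meeting $B(\bs{x}, r)$.

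The main technical step is bounding that count by a constant $C(\nu, d)$. Using the explicit formula (\ref{def: Pn Psi new}), the projection of the centers $P^n_\Phi(\mathfrak{s})$ onto any coordinate axis is a set of $2^n$ reals $\tfrac12 + \tfrac14 \sum_{i=1}^n s_i \ell^{i-1}_\Phi$; if two such sequences first differ at index $k \leq n$, the leading contribution $\tfrac12 \ell^{k-1}_\Phi$ dominates the geometric tail $\tfrac12 \sum_{i>k}\ell^{i-1}_\Phi = \tfrac{\ell^k_\Phi}{2(1 - 2^{-(1+\nu)})}$, yielding a minimum one-dimensional gap of order $(1 - 2^{-\nu})\,\ell^{n-1}_\Phi \gtrsim_\nu \ell^n_\Phi$. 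Since the projection of $B(\bs{x}, r)$ is an interval of length $<2\ell^n_\Phi$, it meets only $O_\nu(1)$ such center values per coordinate, hence at most $C(\nu, d)$ generation-$n$ cubes in total. Combining with $r \geq \ell^{n+1}_\Phi = 2^{-(n+1)(1+\nu)}$ and $s(1+\nu) = d$ gives $\mu(B(\bs{x}, r)) \leq C(\nu, d)\cdot 2^{-nd} \lesssim r^{s}$, so the mass distribution principle produces $\mathcal{H}^{s}(\mathcal{C}_\Phi) > 0$ and thus $\dim_H \mathcal{C}_\Phi \geq s$. The main obstacle is the gap estimate above, which is precisely where the hypothesis $\nu > 0$ is genuinely used; once it is in place the rest is a routine computation.
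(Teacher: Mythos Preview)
Your argument is correct and is precisely the standard textbook computation (natural covers for the upper bound, mass distribution principle for the lower bound) that the paper defers to by citing \cite{SteinShakarchiRealAnalysis} and \cite{FalconerFractal} rather than writing out a proof. In that sense you have supplied more detail than the paper itself; there is nothing to compare against beyond noting that your route is the expected one.
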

\begin{proof}
    The calculation of the Hausdorff dimension of a Cantor set is standard, see for example, \cite[ch. 7]{SteinShakarchiRealAnalysis} or \cite[ch. 3]{FalconerFractal}.
\end{proof}

Finally, we finish this section by stating a simple lemma that will be useful in proof of Proposition \ref{Prop of vi} given in section \ref{subsection: assembly of moving blobs}. We first let
\begin{eqnarray}
    \vartheta = \vartheta(\nu) \coloneqq \frac{1}{8}(2^\nu - 1).
    \label{def: varepsilon}
\end{eqnarray}
\begin{lemma}
For any $a \in [0, 1]$ and for any $\mathfrak{s} \in S_i$ for some $i \in \mathbb{N}$, let $\bs{x}_a = (1-a) \wt{P}^i_\Phi(\mathfrak{s}) + a P^i_\Theta(\mathfrak{s})$ then
\begin{eqnarray}
    \ol{Q}(\bs{x}_a, (1 + \vartheta) \ell^i_\Phi) \Subset Q(P^i_\Theta(\mathfrak{s}), \ell^i_\Theta).  \nonumber
\end{eqnarray}
\label{lemma: an important subset}
\end{lemma}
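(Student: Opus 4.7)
The plan is to argue componentwise using the explicit coordinate formulas for $P^i_\Theta$ and $\wt{P}^i_\Phi$ given in \eqref{def: Pn Psi new} and \eqref{def: Pn tilde}, together with the geometric relations $\ell^{i-1}_\Theta = 2\ell^i_\Theta$ and $\ell^{i-1}_\Phi = 2^{1+\nu}\ell^i_\Phi$ that follow from the choice \eqref{def: seq Phi}. The containment $\Subset$ is a componentwise statement, so it suffices to fix an index $j \in \{1,\dots,d\}$ and bound $|(\bs{y} - P^i_\Theta(\mathfrak{s}))_j|$ for an arbitrary $\bs{y} \in \ol{Q}(\bs{x}_a,(1+\vartheta)\ell^i_\Phi)$.

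First I would compute the displacement between the two centers. Subtracting \eqref{def: Pn tilde} from the expression in \eqref{def: Pn Psi new} applied to $P^i_\Theta$, the first $i-1$ sums cancel and only the last terms remain, giving
\begin{equation*}
\bigl(P^i_\Theta(\mathfrak{s}) - \wt{P}^i_\Phi(\mathfrak{s})\bigr)_j \;=\; \frac{s_i^j}{4}\bigl(\ell^{i-1}_\Theta - \ell^{i-1}_\Phi\bigr),
\end{equation*}
valid for $i \geq 2$ (and trivially zero for $i=1$, since then $\wt{P}^1_\Phi = P^1_\Phi = P^1_\Theta$). Because $\bs{x}_a$ is a convex combination of $\wt{P}^i_\Phi(\mathfrak{s})$ and $P^i_\Theta(\mathfrak{s})$, this yields $|(\bs{x}_a - P^i_\Theta(\mathfrak{s}))_j| \leq \tfrac{1}{4}(\ell^{i-1}_\Theta - \ell^{i-1}_\Phi)$.

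Next, a triangle inequality gives, for any $\bs{y} \in \ol{Q}(\bs{x}_a,(1+\vartheta)\ell^i_\Phi)$,
\begin{equation*}
\bigl|(\bs{y} - P^i_\Theta(\mathfrak{s}))_j\bigr| \;\leq\; \tfrac{1}{2}(1+\vartheta)\ell^i_\Phi + \tfrac{1}{4}\bigl(\ell^{i-1}_\Theta - \ell^{i-1}_\Phi\bigr).
\end{equation*}
Substituting $\ell^{i-1}_\Theta = 2\ell^i_\Theta$ and $\ell^{i-1}_\Phi = 2^{1+\nu}\ell^i_\Phi$ turns the right-hand side into $\tfrac{1}{2}\ell^i_\Theta + \tfrac{1}{2}\ell^i_\Phi(1+\vartheta - 2^\nu)$. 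With $\vartheta = \tfrac{1}{8}(2^\nu-1)$ from \eqref{def: varepsilon}, a direct computation gives $1 + \vartheta - 2^\nu = -\tfrac{7}{8}(2^\nu-1) < 0$, so
\begin{equation*}
\bigl|(\bs{y} - P^i_\Theta(\mathfrak{s}))_j\bigr| \;\leq\; \tfrac{1}{2}\ell^i_\Theta - \tfrac{7}{16}(2^\nu-1)\ell^i_\Phi \;<\; \tfrac{1}{2}\ell^i_\Theta,
\end{equation*}
with a strictly positive margin. Since this holds for every $j$, the closed cube $\ol{Q}(\bs{x}_a,(1+\vartheta)\ell^i_\Phi)$ sits strictly inside $Q(P^i_\Theta(\mathfrak{s}),\ell^i_\Theta)$, which is the desired $\Subset$.

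There is no real obstacle here: the whole content of the lemma is the verification that the constant $\vartheta$ chosen in \eqref{def: varepsilon} is small enough to absorb the worst-case displacement $\tfrac{1}{4}(\ell^{i-1}_\Theta - \ell^{i-1}_\Phi)$ of $\bs{x}_a$ from the dyadic center. The only step that requires any care is the componentwise reduction of the convex combination, after which the geometric identities relating consecutive generations of $\Theta$-cubes and $\Phi$-cubes reduce everything to a one-line numerical inequality on $\vartheta$ and $2^\nu$.
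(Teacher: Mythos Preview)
Your proof is correct and is exactly the ``straightforward verification'' the paper alludes to without writing out; the paper gives no further details. Your componentwise computation of the center displacement, the triangle-inequality reduction, and the final numerical check $1+\vartheta-2^\nu=-\tfrac{7}{8}(2^\nu-1)<0$ are all accurate, and the $i=1$ case is handled correctly since $\ell^0_\Theta=\ell^0_\Phi=1$ makes the same formula go through.
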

\begin{proof}
    The proof is a straightforward verification of the statement.
\end{proof}


\section{Overview of the approach}
\label{section: overview}
In order to prove Theorem \ref{thm unsteady}, we aim to construct a vector field $\bs{u}$  such that there exists a flow map for which, at some time, $T > 0$, we have
\begin{eqnarray}
   X^{\bs{u}}(T, \mathbb{T}^d) \subseteq \mathcal{C}_\Phi.
   \label{cond: X u crushing}
\end{eqnarray}
From Lemma \ref{lemma: Hausdorff dimension}, we see that the Hausdorff dimension $\dim_H \mathcal{C}_\Phi < d$. Therefore, the $d$-dimensional Lebesgue measure of $\mathcal{C}_\Phi$ is zero. As a result, this flow map $X^{\bs{u}}$ is not a regular Lagrangian flow. However, as we shall see, the flow $\bs{u}$ possesses a Sobolev regularity and falls in the range of DiPerna and Lions theory, which then guarantees the existence of a regular Lagrangian flow $X^{\bs{u}}_{\textit{\tiny RL}}$ as well. The existence of two different flow maps implies that the set of initial conditions for which the trajectories are not unique is a set of positive measure. Because the condition (\ref{cond: X u crushing}) holds, this set is, in fact, a full-measure set. 

 \begin{figure}[H]
\centering
 \includegraphics[scale = 0.43]{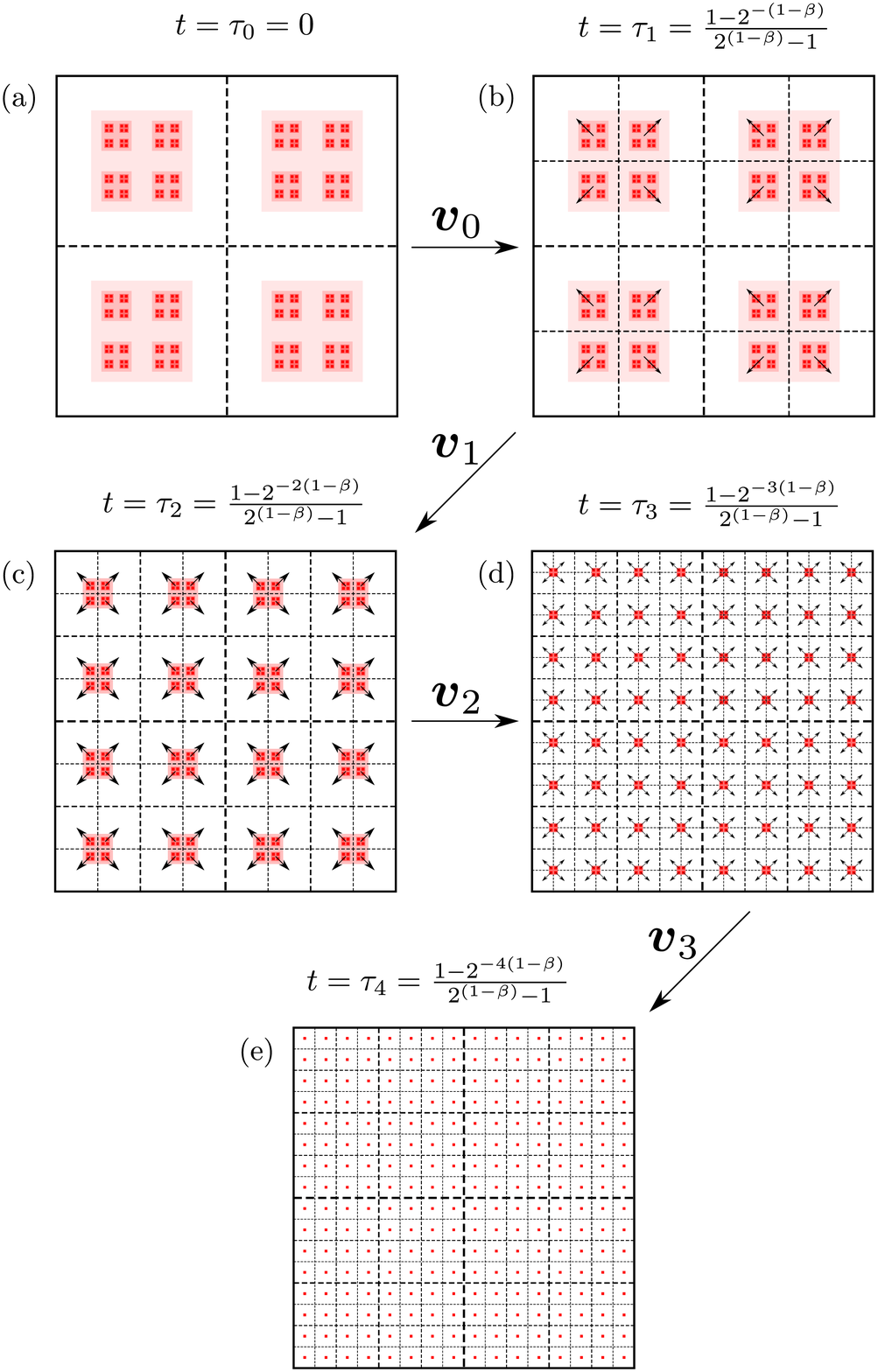}
 \caption{The motion of the first four generations of cubes in the Cantor set construction process under the flow of vector field $\bs{v} = \sum_{i=0}^\infty \bs{v}_i$. At the $i$th stage, the vector field $\bs{v}_i$ translates the cubes of the $i$th generation so that their centers align with the dyadic cubes of the $i$th generation. In the first step, by definition of the centers (\ref{def: Pn Psi new}), we have this alignment at $t = \tau_0$ itself. Therefore, in the first step, we simply choose $\bs{v}_0 \equiv 0$. The arrows in the figure indicate the direction of the motion of cubes. In panels (b), (c) and (d), the tails of the arrow lie at the shifted center (see (\ref{def: Pn tilde})), $\wt{P}^1_\Phi(\mathfrak{s}_1)$, $\wt{P}^2_\Phi(\mathfrak{s}_2)$ and $\wt{P}^3_\Phi(\mathfrak{s}_3)$, respectively, for $\mathfrak{s}_1 \in S_1$, $\mathfrak{s}_2 \in S_2$ and $\mathfrak{s}_3 \in S_3$. Finally, in our construction, the vector field $\bs{v}_{i-1}$ is supported near the $i$th generation cubes in the Cantor set construction process as they move around in space. The size of these $i$th generation cubes becomes increasingly smaller compared to $i$th generation dyadic cubes, as can be seen in panel (e), for example. This shrinking of the support of the vector field $\bs{v}_i$ with large $i$ is one of the main reasons that allow us to bound the Sobolev norm of the vector field uniformly in time.}
 \label{fig: Time evolution}
\end{figure}

We construct the vector field $\bs{u}$ through a time reversal argument applied to a vector field $\bs{v}$, whose construction we describe next. For some $0 < \beta < 1$, let us define a sequence of times as
\begin{eqnarray}
    \tau_i \coloneqq \frac{1 - 2^{-(1-\beta)i}}{2^{(1-\beta)} - 1} \quad \text{for} \quad i \in \mathbb{Z}_{\geq 0} \qquad \text{and} \qquad \tau_\infty \coloneqq \frac{1}{2^{(1-\beta)} - 1}. 
    \label{def: tau i}
\end{eqnarray}
It is clear that
\begin{eqnarray}
    \tau_{i}-\tau_{i-1} = \frac{1}{2^{(1-\beta)i}} \quad \text{for} \quad i \in \mathbb{N}. \nonumber
\end{eqnarray}

We design the vector field $\bs{v}$ to be such that it has a unique flow map $X^{\bs{v}}$. Moreover, at time $t = \tau_\infty$, we have
\begin{eqnarray}
   X^{\bs{v}}(\tau_\infty, \mathcal{C}_\Phi) = \mathbb{T}^d.
   \label{cond: X v expanding}
\end{eqnarray}
In our definition of the vector field $\bs{v}$, we will write $\bs{v}$ to be an infinite sum of vector fields $\bs{v}_i$'s, i.e., $\bs{v} = \sum_{i=0}^\infty \bs{v}_i$. Under the flow of vector field $\bs{v}$, the mapping of $\mathcal{C}_\Phi$ to $\mathbb{T}^d$ will occur in a sequence of infinite steps. The vector field $\bs{v}_{i-1}$, whose support lies in $[\tau_{i-1}, \tau_{i}]$, will execute the $i$th step in the sequence. Figure \ref{fig: Time evolution} depicts the first four steps in this infinite sequence of steps.

In the first step, the vector field $\bs{v}_0$ translates $Q(P^1_\Phi(\mathfrak{s}), \ell^1_\Phi)$ (the first-generation cubes in the Cantor set construction process) such that after the translation, the centers of these cubes align with the centers of first-generation dyadic cubes.
Continuing in this way, at the $i$th step,  the vector field $\bs{v}_{i-1}$ translates the cubes $Q(\wt{P}^{i}_\Phi(\mathfrak{s}), \ell^i_\Phi)$ (the $i$th generation cubes in the Cantor set construction process after translations under flow $\bs{v}_j$'s, $j < i-1$) such that their centers align with the centers of the $i$th generation dyadic cubes.
After performing the $i$th step, the centers of the $i+1$th generation of cubes in the Cantor set construction process now shift to $\wt{P}^{i+1}_\Phi(\mathfrak{s})$, which we note are different from the centers $P^{i+1}_\Phi(\mathfrak{s})$ in the original configuration of the Cantor set. To translate the cubes at any step, we use what we call a ``blob flow." An overview and the construction of a blob flow are given in section \ref{sec: blob flow}.

Now we quickly see why the flow of vector field $\bs{v}$ takes $\mathcal{C}_\Phi$ to $\mathbb{T}^d$.
From Lemma \ref{lemma: Torus d s}, we see that for every $\bs{x}_e \in \mathbb{T}^d$, there is an element $\bs{\mathfrak{s}} \in \mathcal{S}$ such that $\bs{x}_e = \mathcal{P}_\Theta(\bs{\mathfrak{s}})$. But for this $\bs{\mathfrak{s}}$ there is $\bs{x}_s \in \mathcal{C}_\Phi$ given by $\bs{x}_s = \mathcal{P}_\Phi(\bs{\mathfrak{s}})$. From the description given above, a trajectory corresponding to vector field $\bs{v}$ starting at $\bs{x}_s$ after time $\tau_i$ is given by $\gamma^{\bs{v}}_{\bs{x}_s}(\tau_i) = \mathcal{P}_\Phi(\bs{\mathfrak{s}}) - P^i_\Phi(\sigma_i(\bs{\mathfrak{s}})) + P^i_\Theta(\sigma_i(\bs{\mathfrak{s}}))$. Letting $i \to \infty$, we see that $\gamma^{\bs{v}}_{\bs{x}_s}(\tau_\infty) = \mathcal{P}_\Theta(\bs{\mathfrak{s}})$. In conclusion, $X^{\bs{v}}(\tau_\infty, \mathcal{C}_\Phi) = \mathbb{T}^d$.



Finally, we note that the vector field $\bs{v}$ that we create lies in $C([0, \tau_\infty]; W^{1,p}(\mathbb{T}^d, \mathbb{R}^d))$, and we can do this for any $1 \leq p < d$. Our vector field design has one main advantage compared to, for example, the ``checkerboard" flow used in optimal mixing problems \cite{YaoZlatos17, AlbertiCrippaMazzucato19}. For both a checkerboard flow and our vector field $\bs{v}$, the number of cubes at the $i$th step are the same, which is $2^{id}$. However, the size of cubes that we translate at the $i$th step in our vector field $\bs{v}$ is $\frac{1}{2^{(1+\nu)i}}$, which is at large $i$ substantially smaller than $\frac{1}{2^i}$, the size of cubes at the $i$th step in a typical checkerboard flow. This is one of the main reasons we are able to bound the Sobolev norm uniformly in time.

\section{Proof of Theorem \ref{thm unsteady} and the design of vector field $\bs{v}$}
\label{sec: proof of thms}
This section will prove Theorem \ref{thm unsteady} and construct the vector field $\bs{v}$ from last section, given the properties of its components $\bs{v}_i$. Before stating the next proposition, recall the definition of $\Phi$ from (\ref{def: seq Phi}), which uses a parameter $\nu$ and the definition of $\tau_i$ from (\ref{def: tau i}), which uses a parameter $\beta$.

\begin{proposition}
\label{Prop of v}
For every $p < d$ and $\alpha < 1$, there are two numbers, $\nu \coloneqq \nu(p, \alpha) \in (0, 1)$ and $\beta \coloneqq 1 - \nu^2$, and there exists a divergence-free vector field $\bs{v} \in C([0, \tau_\infty]; W^{1, p}(\mathbb{T}^d, \mathbb{R}^d)) \cap C^{\alpha}([0, \tau_\infty] \times \mathbb{T}^d, \mathbb{R}^d) \cap C^\infty([0, \tau_\infty) \times \mathbb{T}^d, \mathbb{R}^d)$ with the following two properties: \\
(i) For every $\bs{x}_s \in \mathbb{T}^d$, the trajectory $\gamma_{\bs{x}_s}^{\bs{v}} : [0, \tau_\infty] \to \mathbb{T}^d$ is unique. As a result, the flow map $X^{\bs{v}}$ is also unique. \\
(ii) Let $\Phi$ be as in (\ref{def: seq Phi}). Then given a sequence $\bs{\mathfrak{s}} \in \mathcal{S}$, let $\bs{x}_s = \mathcal{P}_\Phi(\bs{\mathfrak{s}})$ and $\bs{x}_e = \mathcal{P}_\Theta(\bs{\mathfrak{s}})$, then $\gamma_{\bs{x}_s}^{\bs{v}}(\tau_\infty) = \bs{x}_e$. \\
\end{proposition}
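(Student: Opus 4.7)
The plan is to build $\bs{v} = \sum_{i \geq 0} \bs{v}_i$ with $\bs{v}_0 \equiv 0$ (since, as noted in the overview, the first-generation Cantor cubes are already aligned with the first-generation dyadic cubes), and for $i \geq 1$ to take $\bs{v}_{i-1}$ to be the blob-flow construction of Section \ref{sec: blob flow} applied in parallel on each of the $2^{id}$ cubes $\ol{Q}(\wt{P}^i_\Phi(\mathfrak{s}), \ell^i_\Phi)$, $\mathfrak{s} \in S_i$. I will require $\supp_t \bs{v}_{i-1} \subset [\tau_{i-1}, \tau_i]$ with smooth cutoffs vanishing to infinite order at both endpoints, arranged so that over this time interval each cube is rigidly translated from center $\wt{P}^i_\Phi(\mathfrak{s})$ to $P^i_\Theta(\mathfrak{s})$. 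Lemma \ref{lemma: an important subset} guarantees that the whole translation is confined to $Q(P^i_\Theta(\mathfrak{s}), \ell^i_\Theta)$, and Lemma \ref{lemma: disjoint open cubes} makes these ambient dyadic cubes pairwise disjoint; hence the parallel pieces of $\bs{v}_{i-1}$ have disjoint supports, at most one summand of $\bs{v}$ is nonzero at any $(t, \bs{x})$, the sum is divergence-free (each blob piece being divergence-free by design), and $\bs{v}$ is $C^\infty$ on $[0, \tau_\infty) \times \mathbb{T}^d$ by local finiteness.

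The regularity rests on the scalings
\begin{equation*}
\|\bs{v}_{i-1}\|_\infty \lesssim \frac{\ell^{i-1}_\Theta}{\tau_i - \tau_{i-1}} \simeq 2^{-\beta i}, \qquad \|\nabla \bs{v}_{i-1}\|_\infty \lesssim \frac{\|\bs{v}_{i-1}\|_\infty}{\ell^i_\Phi} \simeq 2^{\nu(1+\nu)i},
\end{equation*}
where $\beta := 1 - \nu^2 = (1-\nu)(1+\nu)$ produces the clean second identity. Summing over the $2^{id}$ disjoint supports of volume $(\ell^i_\Phi)^d = 2^{-(1+\nu)id}$ yields
\begin{equation*}
\|\nabla \bs{v}_{i-1}\|_{L^p}^p \lesssim 2^{id} \cdot 2^{\nu(1+\nu)ip} \cdot 2^{-(1+\nu)id} = 2^{\nu i[(1+\nu)p - d]},
\end{equation*}
which decays geometrically when $\nu < d/p - 1$, delivering $\bs{v} \in C([0, \tau_\infty]; W^{1,p})$. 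The spatial Hölder interpolation $[\bs{v}_{i-1}]_{C^\alpha_x} \lesssim \|\bs{v}_{i-1}\|_\infty^{1-\alpha} \|\nabla \bs{v}_{i-1}\|_\infty^{\alpha}$ collapses to a geometric series with ratio $2^{(1+\nu)(\alpha + \nu - 1)}$, and the time estimate, using $\|\partial_t \bs{v}_{i-1}\|_\infty \lesssim 2^{(1-2\beta)i}$, to $2^{\alpha - \beta(1+\alpha)}$; both are summable when $\nu < 1 - \alpha$ (the latter inequality following from $1 - \alpha < 1/\sqrt{1+\alpha}$). Taking $\nu := \tfrac{1}{2}\min\{1,\, d/p - 1,\, 1 - \alpha\}$ and $\beta := 1 - \nu^2$ therefore delivers all three regularities at once.

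For property (i), the $C^\infty$ regularity of $\bs{v}$ on $[0, \tau_\infty) \times \mathbb{T}^d$ and Cauchy--Lipschitz yield a unique smooth trajectory on $[0, \tau_\infty)$, and since $\bs{v} \in L^\infty$ this extends continuously and uniquely to $\tau_\infty$. For property (ii), a telescoping argument using the identity $\wt{P}^j_\Phi(\mathfrak{s}) = P^j_\Phi(\mathfrak{s}) - P^{j-1}_\Phi(\mathfrak{s}^\prime) + P^{j-1}_\Theta(\mathfrak{s}^\prime)$ from (\ref{def: Pn tilde}) and the observation that $\wt{P}^1_\Phi = P^1_\Theta$ shows that after $i$ translation steps,
\begin{equation*}
\gamma^{\bs{v}}_{\bs{x}_s}(\tau_i) = \mathcal{P}_\Phi(\bs{\mathfrak{s}}) - P^i_\Phi(\sigma_i(\bs{\mathfrak{s}})) + P^i_\Theta(\sigma_i(\bs{\mathfrak{s}})).
\end{equation*}
Sending $i \to \infty$ and using $P^i_\Phi \to \mathcal{P}_\Phi$ and $P^i_\Theta \to \mathcal{P}_\Theta$ (with geometric rates $\ell^i_\Phi, \ell^i_\Theta$) gives $\gamma^{\bs{v}}_{\bs{x}_s}(\tau_\infty) = \mathcal{P}_\Theta(\bs{\mathfrak{s}}) = \bs{x}_e$. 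The main obstacle is to calibrate $\nu$ and $\beta$ so that the Sobolev, spatial-Hölder, and temporal-Hölder estimates hold simultaneously while the partial sums converge in $C_t$ and not merely pointwise; the coupled choice $\beta = 1 - \nu^2$ is exactly what lets the spatial Hölder exponent approach $1$ while keeping the critical Sobolev count $(1+\nu)p$ strictly below $d$.
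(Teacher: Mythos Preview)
Your proposal is correct and follows the same route as the paper: assemble $\bs{v}=\sum_i \bs{v}_i$ from the blob flows of Section~\ref{sec: blob flow} with disjoint time supports, verify the identical scaling estimates under the coupling $\beta=1-\nu^2$, and run the same telescoping induction for property~(ii). One harmless slip: your bound $\|\partial_t \bs{v}_{i-1}\|_\infty \lesssim 2^{(1-2\beta)i}$ drops the transport contribution $|\bs{x}_p'|^2\|\nabla\bs{w}\|_\infty/\lambda$ from Proposition~\ref{Prop of v tilde}(viii), so the correct exponent is $(1+\nu-2\beta)i$; however, your constraint $\nu<1-\alpha$ is strict enough (indeed $1-\alpha<1/(1+\alpha)$) that the temporal H\"older estimate still closes.
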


\begin{proof}[Proof of Theorem \ref{thm unsteady}]
Let's define the required vector field $\bs{u} : [0, T] \times \mathbb{T}^d \to \mathbb{R}^d$ as
\begin{eqnarray}
    \bs{u}(t, \bs{x}) \coloneqq - \frac{\tau_\infty}{T} \bs{v} \left( \tau_\infty \left(1 - \frac{t}{T}\right), \bs{x}\right). \nonumber
\end{eqnarray}
Clearly, $\bs{u} \in C([0, T]; W^{1, p}(\mathbb{T}^d, \mathbb{R}^d)) \cap C^{\alpha}([0, T] \times \mathbb{T}^d, \mathbb{R}^d)$ and is divergence-free.

Next, for every $\bs{x} \in \mathbb{T}^d$, we define a trajectory corresponding to the vector field $\bs{u}$ that starts at $\bs{x}$. From Lemma \ref{lemma: Torus d s}, for every $\bs{x}_e \in \mathbb{T}^d$, there exist an $\bs{\mathfrak{s}} \in \mathcal{S}$ such that $\bs{x}_e = \mathcal{P}_\Theta(\bs{\mathfrak{s}})$. After choosing such an $\bs{\mathfrak{s}}$, let us assign $\bs{x}_s = \mathcal{P}_\Phi(\bs{\mathfrak{s}})$. With that, it is easy to verify that $\gamma^{\bs{u}}_{\bs{x}_e}:[0, T] \to \mathbb{T}^d$ defined as
\begin{eqnarray}
    \gamma^{\bs{u}}_{\bs{x}_e} (t) \coloneqq \gamma^{\bs{v}}_{\bs{x}_s} \left(\tau_\infty \left(1 - \frac{t}{T}\right)\right) \nonumber
\end{eqnarray}
is a trajectory corresponding to the vector field $\bs{u}$ starting from $\bs{x}_e$.

Finally, we define a map $X^{\bs{u}}:[0,T]\times\mathbb{T}^d \to \mathbb{T}^d$ as 
\begin{eqnarray}
  X^{\bs{u}}(t, \bs{x}) \coloneqq \gamma^{\bs{u}}_{\bs{x}}(t). \nonumber  
\end{eqnarray}
Clearly, $X^{\bs{u}}$ is a flow map for which 
\begin{eqnarray}
    X^{\bs{u}}(T, \mathbb{T}^d) \subseteq \mathcal{C}_\Phi.
    \label{X u crusing condition 2}
\end{eqnarray}
As the Hausdorff dimension $\dim_H \mathcal{C}_\Phi < d$, which means $\mathscr{L}^d(\mathcal{C}_\Phi) = 0$. Therefore, $X^{\bs{u}}$ is not a regular Lagrangian flow. As the vector field $\bs{u}$ has the required Sobolev regularity, from DiPerna--Lions theory, there is another flow map $X^{\bs{u}}_{\textit{\tiny RL}}$ which is regular Lagrangian. This implies that the set of initial conditions for which the trajectories are not unique has a positive $d$-dimensional Lebesgue measure. Furthermore, from (\ref{X u crusing condition 2}) and the definition of regular Lagrangian flow (\ref{def: Regular Lagrangian flow}), we see that this set is, in fact, a full-measure set.
\end{proof}

\begin{proposition}
\label{Prop of vi}
Given $\nu \in (0,1)$ and $\beta \in (0, 1)$, for every $i \in \mathbb{Z}_{\geq 0}$, there exists a vector field $\bs{v}_i : [0, \tau_\infty] \times \mathbb{T}^d \to \mathbb{R}^d$ with the following properties. \\[5pt]
(i) $\bs{v}_i \in C^\infty([0, \tau_\infty] \times \mathbb{T}^d, \mathbb{R}^d) $, \\[5pt]
(ii) $\bs{v}_i$ is divergence-free, \\[5pt]
(iii) $\supp_t \bs{v}_i \Subset (\tau_i, \tau_{i+1})$, \\[5pt]
 (iv) $\norm{\bs{v}_i}_{L^\infty_{t, \bs{x}}} \lesssim \frac{1}{(2^{\nu}-1)} \frac{1}{2^{\beta i}}$ \\[5pt]
(v) $\norm{\nabla \bs{v}_i}_{L^\infty_{t, \bs{x}}} \lesssim \frac{2^{(1 + \nu - \beta) i}}{(2^{\nu}-1)^2}$ \\[5pt]
(vi) For a given $p \in [1, \infty)$,  $\norm{\bs{v}_i(t, \cdot)}_{W^{1, p}} \lesssim 
 \frac{1}{(2^\nu - 1)^2} \times 2^{\frac{[(1 + \nu - \beta)p - d \nu]}{p}i}$, \\[5pt]
 (vii) $\norm{\partial_t \bs{v}_i}_{L^\infty_{t, \bs{x}}} \lesssim \frac{1}{(2^\nu - 1)^2}  \frac{1}{2^{[2 \beta - \nu - 1]i}}$\\[5pt]
(viii) For a given $\mathfrak{s} \in S_{i+1}$ and $\bs{x} \in \ol{Q}(\wt{P}^{i+1}_\Phi(\mathfrak{s}), \ell^{i+1}_\Phi)$, we have $\gamma^{\bs{v}_i}_{\bs{x}}(\tau_{i+1}) = \bs{x} - \wt{P}^{i+1}_\Phi(\mathfrak{s}) + P^{i+1}_\Theta(\mathfrak{s}).$\\[5pt]
\end{proposition}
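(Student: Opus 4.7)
The plan is to build $\bs{v}_i$ as a disjoint superposition of ``local'' blob flows, one per $\mathfrak{s}\in S_{i+1}$. Each such piece $\bs{w}^{(i)}_\mathfrak{s}$ will be a smooth, divergence-free vector field whose spatial support lies inside the $(i{+}1)$th generation dyadic cube $Q(P^{i+1}_\Theta(\mathfrak{s}),\ell^{i+1}_\Theta)$ and whose temporal support lies in $(\tau_i,\tau_{i+1})$, and which rigidly transports the cube $\ol{Q}(\wt{P}^{i+1}_\Phi(\mathfrak{s}),\ell^{i+1}_\Phi)$ to $\ol{Q}(P^{i+1}_\Theta(\mathfrak{s}),\ell^{i+1}_\Phi)$ during that time interval. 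That the entire transport path together with a $(1{+}\vartheta)\ell^{i+1}_\Phi$-neighborhood of the moving cube fits inside the target dyadic cube is exactly Lemma \ref{lemma: an important subset} applied with $i\mapsto i+1$; disjointness of distinct target dyadic cubes is Lemma \ref{lemma: disjoint open cubes}. Thus the pieces $\bs{w}^{(i)}_\mathfrak{s}$ are pairwise non-overlapping in space-time, and I set
\[
\bs{v}_i(t,\bs{x})\;\coloneqq\;\sum_{\mathfrak{s}\in S_{i+1}}\bs{w}^{(i)}_\mathfrak{s}(t,\bs{x}).
\]
Properties (i)--(iii) and (viii) then follow at once: smoothness and the divergence-free condition are preserved under disjoint summation; (iii) is built in; and (viii) records the rigid translation carried out by the unique blob $\bs{w}^{(i)}_\mathfrak{s}$ whose support meets $\ol{Q}(\wt{P}^{i+1}_\Phi(\mathfrak{s}),\ell^{i+1}_\Phi)$.

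The quantitative bounds (iv)--(vii) reduce to a single scaling calculation per blob, together with disjointness of supports. A direct computation from (\ref{def: Pn Psi new})--(\ref{def: Pn tilde}) gives the per-blob translation distance
\[
\Delta_{i+1}\;\coloneqq\;\bigl|\wt{P}^{i+1}_\Phi(\mathfrak{s})-P^{i+1}_\Theta(\mathfrak{s})\bigr|\;\lesssim\;2^{-i},
\]
while $\tau_{i+1}-\tau_i = 2^{-(1-\beta)(i+1)}$. Hence the per-blob velocity scale is $\Delta_{i+1}/(\tau_{i+1}-\tau_i)\sim 2^{-\beta i}$, which yields (iv). Inside the moving cube the field is an honest uniform translation, and its transition to zero happens in a layer of thickness comparable to the side length $\ell^{i+1}_\Phi$ of the moving cube itself, so the gradient scale is $\sim 2^{-\beta i}/\ell^{i+1}_\Phi \sim 2^{(1+\nu-\beta)i}$, giving (v). For (vi), the $|S_{i+1}|=2^{(i+1)d}$ disjoint blobs each contribute a gradient-concentrated volume $\sim (\ell^{i+1}_\Phi)^d$; raising the pointwise bound to the $p$th power, summing and taking the $p$th root produces the exponent $((1+\nu-\beta)p - d\nu)/p$. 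Finally, since each blob is rigidly translated, $|\partial_t\bs{v}_i|\lesssim |\bs{v}_i|\,|\nabla\bs{v}_i| \sim 2^{(1+\nu-2\beta)i}$, giving (vii).

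The main obstacle is not the superposition-plus-scaling above, which is essentially bookkeeping, but the construction of the blob-flow template itself: a smooth, compactly-in-space-and-time, divergence-free vector field on $\mathbb{T}^d$ that rigidly translates a small cube by a prescribed vector and whose $L^\infty$, $W^{1,\infty}$ and $W^{1,p}$ norms obey the scalings used above. The pure rigid-translation field is constant and not compactly supported, and one cannot simply multiply it by a scalar cutoff without either destroying the rigid-translation property inside the cube or breaking the divergence-free condition. Resolving this by working instead with a suitable vector/matrix potential that equals a linear function inside the cube and decays smoothly in a transition layer of thickness $\sim\ell^{i+1}_\Phi$ is precisely the content of section \ref{sec: blob flow}, and is the real heart of the proof. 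A minor technical point is that the temporal cutoff enforcing $\supp_t \bs{v}_i \Subset (\tau_i,\tau_{i+1})$ must be chosen with derivative of order $(\tau_{i+1}-\tau_i)^{-1} = 2^{(1-\beta)(i+1)}$, which the reader can check is already absorbed into the estimate (vii).
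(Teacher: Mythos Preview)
Your proposal is correct and matches the paper's proof essentially line for line: the paper defines $\bs{v}_i\coloneqq\sum_{\mathfrak{s}\in S_{i+1}}\ol{\bs{v}}_{\mathfrak{s}}$ with each $\ol{\bs{v}}_{\mathfrak{s}}$ an instance of the blob flow $\wt{\bs{v}}$ of Proposition~\ref{Prop of v tilde} (parameters $\bs{x}_s=\wt{P}^{i+1}_\Phi(\mathfrak{s})$, $\bs{x}_e=P^{i+1}_\Theta(\mathfrak{s})$, $\lambda=\ell^{i+1}_\Phi$, $\delta=\vartheta(\nu)$, time interval compactly contained in $(\tau_i,\tau_{i+1})$), invokes Lemma~\ref{lemma: an important subset} and Lemma~\ref{lemma: disjoint open cubes} for spatial disjointness exactly as you do, and then reads off (iv)--(viii) from the scaling estimates of Proposition~\ref{Prop of v tilde}. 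Your identification of the blob-flow construction in section~\ref{sec: blob flow} as the genuine content, and of the two contributions to $\partial_t\bs{v}_i$ (transport term $\sim|\bs{v}_i||\nabla\bs{v}_i|$ plus the subdominant temporal-cutoff term), is also on the mark.
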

\begin{proof}[Proof of Proposition \ref{Prop of v}]
Let us define a vector field $\bs{v}:[0, \tau_\infty] \times \mathbb{T}^d \to \mathbb{R}^d$ as follows
\begin{eqnarray}
    \bs{v} \coloneqq \sum_{i = 0}^\infty \bs{v}_i,
\end{eqnarray}
where $\bs{v}_i$'s are the vector fields from Proposition \ref{Prop of vi}. Next, we show that the vector field $\bs{v}$ has the properties stated in Proposition \ref{Prop of v}. \\
\\
In the proof that follows, we make the following choices of parameters: $\nu$ is a sufficiently small number and $\beta = 1 - \nu^2$.\\
\\
\textbullet \;\underline{$\bs{v} \in C^\infty([0, \tau_\infty)\times\mathbb{T}^d, \mathbb{R}^d)$:}  We conclude this by noting property (i) and that the support of $\bs{v}_i$'s are disjoint.\\
\\
\textbullet \;\underline{$\bs{v}$ is divergence-free:} As for any time $t \in [0, \tau_\infty]$, at most one of the $\bs{v}_i$ is non-zero, $\nabla \cdot \bs{v} \equiv 0$ follows from the divergence-free nature of the $\bs{v}_i$'s.\\
\\
\textbullet \; \underline{$\bs{v} \in C([0, \tau_\infty]; W^{1, p}(\mathbb{T}^d, \mathbb{R}^d))$:} From the disjoint support of $\bs{v}_i$'s in time, we note that
\[\norm{\bs{v}}_{L^\infty_t W^{1, p}_{\bs{x}}} \leq \sup_{i} \norm{\bs{v}_i}_{L^\infty_t W^{1, p}_{\bs{x}}}.\]
From property (vi) in Proposition \ref{Prop of vi}, we see that the right-hand side is bounded if we choose
\begin{eqnarray}
  p < \frac{d \nu}{1 + \nu - \beta}.  
  \label{eqn: the cond on p}
\end{eqnarray}
As $\bs{v} \in C^\infty([0, \tau_\infty)\times\mathbb{T}^d, \mathbb{R}^d)$, the continuity of the Sobolev norm in time is clear for any $t \in [0, \tau_\infty)$. 
For $t = \tau_\infty$, we simply have 
\begin{eqnarray}
\norm{\bs{v}(\tau_\infty, \cdot) - \bs{v}(\tilde{t}, \cdot)}_{W^{1, p}} = \norm{\bs{v}(\tilde{t}, \cdot)}_{W^{1, p}}. \nonumber
\end{eqnarray}
Using the property (vi) in Proposition \ref{Prop of vi}, the right-hand side decreases to zero as $\tilde{t}$ approaches $\tau_\infty$ if (\ref{eqn: the cond on p}) holds. Finally, we note that in condition (\ref{eqn: the cond on p}), the exponent $p$ can be made arbitrarily close to $d$ if we choose $\nu$ small enough and $\beta = 1 - \nu^2$. \\
\\
\textbullet \; \underline{$\bs{v} \in C^\alpha([0, \tau_\infty] \times \mathbb{T}^d, \mathbb{R}^d)$:} We start with the definition of  $\alpha$-H\"older norm:
\begin{eqnarray}
    \norm{\bs{v}}_{C^\alpha_{t, \bs{x}}} = \sup_{(t_1, \bs{x}_1) \neq (t_2, \bs{x}_2)} \frac{|\bs{v}(t_1, \bs{x}_1) - \bs{v}(t_2, \bs{x}_2)|}{|(t_1, \bs{x}_1) - (t_2, \bs{x}_2)|^\alpha}.
    \label{eqn: Holder def}
\end{eqnarray}
If $t_1 = t_2 = \tau_\infty$, but $\bs{x}_1 \neq \bs{x}_2$, then the right-hand side in (\ref{eqn: Holder def}) is simply zero. If $t_1 = \tau_\infty$, but $t_2 \neq \tau_\infty$, then we have
\begin{eqnarray}
    \frac{|\bs{v}(t_1, \bs{x}_1) - \bs{v}(t_2, \bs{x}_2)|}{|(t_1, \bs{x}_1) - (t_2, \bs{x}_2)|^\alpha} = \frac{|\bs{v}(t_2, \bs{x}_2)|}{|(\tau_\infty, \bs{x}_1) - (t_2, \bs{x}_2)|^\alpha} \leq \frac{|\bs{v}(t_2, \bs{x}_2)|}{|\tau_\infty - t_2|^\alpha} \lesssim \frac{1}{2^\nu - 1} \sup_{i} \frac{1}{2^{[\beta - \alpha(1-\beta)]i}}, \nonumber
\end{eqnarray}
which is bounded if
\begin{eqnarray}
    \alpha < \frac{\beta}{1-\beta}.
\end{eqnarray}
Finally, if $t_1, t_2 \in [0, \tau_\infty)$, then we have
\begin{eqnarray}
    \frac{|\bs{v}(t_1, \bs{x}_1) - \bs{v}(t_2, \bs{x}_2)|}{|(t_1, \bs{x}_1) - (t_2, \bs{x}_2)|^\alpha} \leq \norm{\partial_t \bs{v}}_{L^\infty_{t, \bs{x}}} |t_1 - t_2|^{1 - \alpha} + \sup_{t} \norm{\bs{v}(t, \cdot)}_{C^\alpha}. \nonumber
\end{eqnarray}
Now the first term on the right-hand side is bounded if $\alpha \leq 1$ and $1 + \nu < 2 \beta$, which is true for the choices we made at the beginning of the proof. For the second term, we have
\begin{align}
    \sup_{\bs{x}_1 \neq \bs{x}_2}\frac{|\bs{v}(t, \bs{x}_1) - \bs{v}(t, \bs{x}_2)|}{| \bs{x}_1 - \bs{x}_2|^\alpha} & \leq 2 \min\left\{|\bs{x}_1 - \bs{x}_2|^{1-\alpha} \norm{\nabla \bs{v} (t, \cdot)}_{L^\infty}, \frac{\norm{\bs{v}(t, \cdot)}_{L^{\infty}}}{|\bs{x}_1 - \bs{x}_2|^\alpha}\right\}, \nonumber \\
    & \leq 2 \norm{\bs{v}(t, \cdot)}_{L^{\infty}}^{1-\alpha} \norm{\nabla \bs{v} (t, \cdot)}_{L^\infty}^{\alpha}, \nonumber \\
    & \lesssim \left(\frac{1}{2^\nu - 1}\right)^{1+\alpha}\sup_{i} \; 2^{[\alpha(1+\nu) - \beta]i}, \nonumber
\end{align}
which is bounded if 
\begin{eqnarray}
    \alpha < \frac{\beta}{1 + \nu}.
\end{eqnarray}
As before, by choosing $\nu$ to be small enough and $\beta = 1 - \nu^2$, the exponent $\alpha$ can be made arbitrarily close to one. \\
\\
\textbullet \; \underline{Uniqueness of trajectories:} As the vector field $\bs{v} \in C^\infty([0, \tau_\infty) \times \mathbb{T}^d, \mathbb{R}^d)$, the existence and uniqueness of a trajectory $\gamma^{\bs{v}}_{\bs{x}}$ starting from $\bs{x} \in \mathbb{T}^d$ is clear for any time $t < \tau_\infty$. The existence and uniqueness at time $t = \tau_\infty$ are obtained from the fact that $\norm{\bs{v}(t, \cdot)}_{L^\infty} \to 0$ as $t \to \tau_\infty$. \\
\\
\textbullet \; \underline{Trajectories starting from $\mathcal{C}_\Phi$:} Let $\bs{x}_s \in \mathcal{C}_\Phi$, then from Lemma \ref{lemma: C Psi unique s}, there is a unique $\bs{\mathfrak{s}} \in \mathcal{S}$ such that $\bs{x}_s = \mathcal{P}_\Phi(\bs{\mathfrak{s}})$. We will show that $\gamma^{\bs{v}}_{\bs{x}_s}(\tau_\infty) = \bs{x}_e$, where $\bs{x}_e = \mathcal{P}_\Theta(\bs{\mathfrak{s}})$, which will then imply $X^{\bs{v}}(\tau_\infty, \mathcal{C}_\Phi) = \mathbb{T}^d$ from Lemma \ref{lemma: Torus d s}.\\ Given a $\bs{x} \in \mathbb{T}^d$, let us first define $\bs{y}_i \in \mathbb{T}^d$ for all $i \in \mathbb{Z}_{\geq 0}$ as
\begin{eqnarray}
    \bs{y}_0 \coloneqq \bs{x}_s, \qquad \text{ and } \qquad \bs{y}_{i+1} \coloneqq \gamma^{\bs{v}_{i}}_{\bs{y}_i}(\tau_i) \quad \forall \, i \in \mathbb{Z}_{\geq 0}. \nonumber
\end{eqnarray}
Next, let us define a trajectory $\gamma^{\bs{v}}_{\bs{x}}:[0, \tau_\infty] \to \mathbb{T}^d$ as
\begin{eqnarray}
    \gamma^{\bs{v}}_{\bs{x}}(t) \coloneqq
    \begin{cases}
    \gamma^{\bs{v}_i}_{\bs{y}_i}(t) & \quad \text{for } t \in [\tau_i, \tau_{i+1}), \\[5pt]
    \lim\limits_{i \to \infty} \gamma^{\bs{v}_i}_{\bs{y}_i}(\tau_{i}) & \quad \text{if } t = \tau_\infty.
    \end{cases}
    \label{eqn: traj gamma v}
\end{eqnarray}
Using property (iii) about the disjoint supports of $\bs{v}_i$'s, one can verify through an induction argument that $\gamma^{\bs{v}}_{\bs{x}}$ is indeed the unique trajectory corresponding to the flow $\bs{v}$ starting at $\bs{x}$. Next, we claim that 
\begin{eqnarray}
    \gamma^{\bs{v}}_{\bs{x}_s}(\tau_i) = \mathcal{P}_{\Phi}(\bs{\mathfrak{s}}) - P^i_\Phi(\sigma_i(\bs{\mathfrak{s}})) + P^i_\Theta(\sigma_i(\bs{\mathfrak{s}})).
    \label{eqn: traj v claim}
\end{eqnarray}
The claim is obviously true for $i = 1$. Now suppose that the claim is true for some $i \geq 1$. We will show that it is also true for $i+1$. From Lemma \ref{lemma: seq lie in every cube}, we know that
\begin{eqnarray}
    \mathcal{P}_\Phi(\bs{\mathfrak{s}}) \in \ol{Q}(P_\Phi^{i+1}(\sigma_{i+1}(\bs{\mathfrak{s}})), \ell^{i+1}_\Phi). \nonumber
\end{eqnarray}
This implies  
\begin{eqnarray}
    \mathcal{P}_{\Phi}(\bs{\mathfrak{s}}) - P^i_\Phi(\sigma_i(\bs{\mathfrak{s}})) + P^i_\Theta(\sigma_i(\bs{\mathfrak{s}})) \in \ol{Q}(\wt{P}_\Phi^{i+1}(\sigma_{i+1}(\bs{\mathfrak{s}})), \ell^{i+1}_\Phi),
    \label{eqn: in the box hypo}
\end{eqnarray}
after using the fact that
\begin{eqnarray}
    \wt{P}_\Phi^{i+1}(\sigma_{i+1}(\bs{\mathfrak{s}})) = P_\Phi^{i+1}(\sigma_{i+1}(\bs{\mathfrak{s}})) - P^i_\Phi(\sigma_i(\bs{\mathfrak{s}})) + P^i_\Theta(\sigma_i(\bs{\mathfrak{s}})).
    \label{eqn: center identity repeat}
\end{eqnarray}
Using (\ref{eqn: in the box hypo}), property (viii) and definition (\ref{eqn: traj gamma v}), we have that
\begin{align}
    \gamma^{\bs{v}}_{\bs{x}_s}(\tau_{i+1}) & = \mathcal{P}_{\Phi}(\bs{\mathfrak{s}}) - P^i_\Phi(\sigma_i(\bs{\mathfrak{s}})) + P^i_\Theta(\sigma_i(\bs{\mathfrak{s}})) - \wt{P}_\Phi^{i+1}(\sigma_{i+1}(\bs{\mathfrak{s}})) + P_\Theta^{i+1}(\sigma_{i+1}(\bs{\mathfrak{s}})) \nonumber \\
    & = \mathcal{P}_{\Phi}(\bs{\mathfrak{s}}) - P_\Phi^{i+1}(\sigma_{i+1}(\bs{\mathfrak{s}})) + P_\Theta^{i+1}(\sigma_{i+1}(\bs{\mathfrak{s}})), \nonumber
\end{align}
where we used (\ref{eqn: center identity repeat}) to obtain the second line. 
Finally, taking the limit $i \to \infty$, we see that
\begin{align}
    \gamma^{\bs{v}}_{\bs{x}_s}(\tau_\infty) = \mathcal{P}_\Theta(\bs{\mathfrak{s}}). \nonumber
\end{align}
\end{proof}
\subsection{Modifications required to prove Theorem \ref{thm steady}}
Our design of a steady vector field $\bs{u}^s:\mathbb{T}^d \to \mathbb{R}^d$ for $d \geq 3$ is based on the unsteady vector field construction of dimension $d-1$, which we denote as $\bs{u}^{d-1}$ in this subsection. Let $\bs{x} = (x_1, x_2, \dots x_d) \in \mathbb{T}^d$, we denote $\bs{x}^\prime = (x_1, \dots x_{d-1}) \in \mathbb{T}^{d-1}$, and we write $\bs{x} = (\bs{x}^\prime, x_d)$. In our definition below, the coordinate $x_d$ serves as a function of time. Given $0 < \varepsilon < 1$, we define
\begin{align}
    \bs{u}^s(\bs{x}^\prime, x_d) \coloneqq 
    \begin{cases}
    (0, \dots, 0, 1) \quad & \text{if} \quad  0 \leq x_d < 1 - \varepsilon, \\
    \left(\frac{1}{\varepsilon}\bs{u}^{d-1}\left(\frac{x_d - (1 - \varepsilon)}{\varepsilon}, \bs{x}^\prime\right), 1\right) \quad & \text{if} \quad  1 - \varepsilon \leq x_d \leq 1,
    \end{cases}
\end{align}
where we use final time $T = 1$ in the definition of $\bs{u}^{d-1}$.
Using the properties of the unsteady vector field $\bs{u}^{d-1}$, it is not difficult to show that $\bs{u}^s$ is indeed the required vector field in Theorem \ref{thm steady}.

\section{Blob flow}
\label{sec: blob flow}
As described in the approach, to translate a cube in the domain from a starting point $\bs{x}_s$ to an endpoint $\bs{x}_e$, we use what we call a ``blob flow.'' A schematic of a blob flow is shown in figure \ref{fig: blob}. The properties of a blob flow $\wt{\bs{v}}$ are specified in Proposition \ref{Prop of v tilde}. For a blob flow, the vector field $\wt{\bs{v}}$ is uniform inside a cube of length $\lambda$, and the support of the vector field lies in a cube of a slightly larger length $\lambda (1 + \delta)$, where $\delta$ can be understood as an offset. The vector field folds back outside the cube of length $\lambda$ to maintain the divergence-free condition  (see figure \ref{fig: blob}). Our goal now is to first construct a stationary blob flow, using which we construct the required blob flow and finally give proof of Proposition \ref{Prop of vi}.
\subsection{A stationary blob flow}
Let us first define a bump function as
\begin{eqnarray}
    \varphi(x) \coloneqq 
    \begin{cases}
        c \exp\left(\frac{1}{x^2-\frac{1}{4}}\right)  \quad & \text{if} \quad x \in \left(-\frac{1}{2}, \frac{1}{2}\right), \\
        0 \quad & \text{if} \quad x \in \left(-\infty, -\frac{1}{2}\right] \cup \left[\frac{1}{2}, \infty\right),
    \end{cases}
    \label{def: varphi}
\end{eqnarray}
where the constant $c$ is chosen such that $\int_{\mathbb{R}} \varphi(x^\prime) \, {\rm d} x^\prime = 1$. For any $\varepsilon > 0$, we define a standard mollifier as
\begin{eqnarray}
    \varphi_\varepsilon(x) \coloneqq \frac{1}{\varepsilon} \varphi\left(\frac{x}{\varepsilon}\right).
\end{eqnarray}
Next, for $\delta \in (0, 1)$, let $\chi_{[-\frac{1}{2} -\frac{3 \delta}{8}, \frac{1}{2} + \frac{3 \delta}{8}]}$ be an indicator function which is $1$ if $x \in [-\frac{1}{2} -\frac{3 \delta}{8}, \frac{1}{2} + \frac{3 \delta}{8}]$ and zero otherwise. We define a function $\zeta_1 : \mathbb{R} \to \mathbb{R}$ as 
\begin{eqnarray}
    \zeta_1 \coloneqq \varphi_{\frac{\delta}{8}} \ast \chi_{[-\frac{1}{2} -\frac{3 \delta}{8}, \frac{1}{2} + \frac{3 \delta}{8}]}. 
\end{eqnarray}
For $d \geq 2$, we define $\zeta_d : \mathbb{R}^d \to \mathbb{R}$ as
\begin{eqnarray}
    \zeta_d(\bs{x}) \coloneqq \zeta_1(x_1) \zeta_1(x_2) \dots \zeta_1(x_d). 
\end{eqnarray}
It is a standard calculation to check that $\zeta_d \in C^\infty_c(\mathbb{R}^d)$, $\supp \zeta_d \subseteq [-\frac{1}{2} -\frac{\delta}{2}, \frac{1}{2} + \frac{\delta}{2}]^d$, $\norm{\zeta_d}_{L^\infty} = 1$ and that $\zeta_d(\bs{x}) = 1$ if $\bs{x} \in [-\frac{1}{2}, \frac{1}{2}]^d$. Furthermore, $\norm{\nabla^i \zeta_d}_{L^\infty} \lesssim \frac{1}{\delta^{i}}$ for $i \in \mathbb{N}$.

Let $\bs{q} \in \mathbb{S}^{d - 1}$ and $d = 2k$ or $2k+1$ for some $k \in \mathbb{N}$, we define a function $F_1 : \mathbb{R}^d \to \mathbb{R}$ as
\begin{eqnarray}
    F_1(\bs{x}) \coloneqq \left(q_1 x_2 - q_2 x_1 + q_3 x_4 - q_4 x_3 \dots + q_{2k-1} x_{2k} - q_{2k} x_{2k-1}\right) \zeta_d(\bs{x}).
\end{eqnarray}
When $d = 2k+1$, we also define a function $F_2 : \mathbb{R}^d \to \mathbb{R}$ as
\begin{eqnarray}
    F_2(\bs{x}) \coloneqq q_{2k+1} x_1 \zeta_d(\bs{x}).
\end{eqnarray}
Finally, we define the stationary blob flow $\bs{w}:\mathbb{R}^d \to \mathbb{R}^d$. When $d = 2k$, we let
\begin{eqnarray}
    \bs{w} = (w_1, w_2, \dots, w_{2k}) \coloneqq \left(\partial_{x_2} F_1, - \partial_{x_1} F_1, \partial_{x_4} F_1, -\partial_{x_3} F_1, \dots, \partial_{x_{2k}} F_1, -\partial_{x_{2k-1}} F_1\right),
\end{eqnarray}
and when $d = 2k+1$, we let
\begin{eqnarray}
    \bs{w} = (w_1, w_2, \dots, w_{2k+1}) \coloneqq \left(\partial_{x_2} F_1 - \partial_{x_{2k+1}} F_2, - \partial_{x_1} F_1, \partial_{x_4} F_1, -\partial_{x_3} F_1, \dots, \partial_{x_{2k}} F_1, -\partial_{x_{2k-1}} F_1, \partial_{x_1} F_2    \right).
\end{eqnarray}
From the definition itself, it is clear that $\bs{w}$ is divergence-free. In general, we record the properties of $\bs{w}$ in the following lemma.
\begin{lemma}
\label{Prop of w}
Depending on two parameters, $\bs{q} \in \mathbb{S}^{d-1}$ and $\delta \in (0, 1)$, there is a divergence-free vector field $\bs{w}(\cdot \, ; \bs{q}, \delta) \in C_c^\infty(\mathbb{R}^d, \mathbb{R}^d)$ with the following properties. \\[5pt]
(i) $\supp \bs{w} \subseteq \ol{Q}(\bs{0}, 1 + \delta)$, \\[5pt]
(ii) If $\bs{x} \in \ol{Q}(\bs{0}, 1)$, then $\bs{w}(\bs{x}) = \bs{q}$, \\[5pt]
(iii) $\norm{\bs{w}}_{L^\infty} \lesssim \frac{1}{\delta}$, \\[5pt]
(iv) $\norm{\nabla^i \bs{w}}_{L^\infty} \lesssim \frac{1}{\delta^{i+1}} \quad \forall \; i \in \mathbb{N}$.
\end{lemma}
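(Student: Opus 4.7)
My plan is to verify the four claimed properties by direct inspection of the explicit formulas for $F_1$, $F_2$ and $\bs{w}$, exploiting the fact that the construction uses only derivatives of products of a fixed linear function and the cutoff $\zeta_d$.

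First, divergence-freeness is built into the construction via antisymmetric pairing. When $d=2k$, I pair consecutive components: $(w_{2j-1},w_{2j})=(\partial_{x_{2j}}F_1,-\partial_{x_{2j-1}}F_1)$, so each pair contributes $\partial_{x_{2j-1}}\partial_{x_{2j}}F_1-\partial_{x_{2j}}\partial_{x_{2j-1}}F_1=0$ to $\nabla\!\cdot\!\bs{w}$. When $d=2k+1$, the same argument handles the pairs involving $F_1$, and the two extra terms $-\partial_{x_{2k+1}}F_2$ in $w_1$ and $\partial_{x_1}F_2$ in $w_{2k+1}$ cancel by equality of mixed partials. For the support claim, note that $F_1$ and $F_2$ are both the product of a polynomial with $\zeta_d$, so $\supp F_i\subseteq\supp\zeta_d\subseteq\overline{Q}(\bs{0},1+\delta)$; derivatives preserve the support, so $\supp\bs{w}\subseteq\overline{Q}(\bs{0},1+\delta)$.

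For property (ii), I would use that $\zeta_d\equiv 1$ on $[-\tfrac12,\tfrac12]^d=\overline{Q}(\bs{0},1)$ and hence $\nabla\zeta_d\equiv 0$ on the interior of this set. Thus on $\overline{Q}(\bs{0},1)$ the function $F_1$ reduces to the linear expression $q_1x_2-q_2x_1+\cdots+q_{2k-1}x_{2k}-q_{2k}x_{2k-1}$, and a direct differentiation gives $\partial_{x_{2j}}F_1=q_{2j-1}$ and $-\partial_{x_{2j-1}}F_1=q_{2j}$, which is exactly $(q_{2j-1},q_{2j})$. When $d$ is odd, the same argument applied to $F_2=q_{2k+1}x_1$ yields $\partial_{x_1}F_2=q_{2k+1}$, and $\partial_{x_{2k+1}}F_2=0$ there since $F_2$ is independent of $x_{2k+1}$ on this set; so $\bs{w}\equiv\bs{q}$ on $\overline{Q}(\bs{0},1)$. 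By continuity the equality extends to the closed cube.

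For the uniform bounds (iii) and (iv), I would apply the Leibniz rule to $F_i=L_i\,\zeta_d$ with $L_i$ linear. Since $L_i$ and $\nabla L_i$ are the only nonzero derivatives of $L_i$, for any multi-index $\alpha$ of order $m\geq 1$, $|\partial^\alpha F_i|\lesssim |L_i|\,|\nabla^m\zeta_d|+|\nabla L_i|\,|\nabla^{m-1}\zeta_d|$. On $\supp\zeta_d$ the linear term $L_i$ is bounded by a constant depending only on $d$, and by the properties of the standard mollifier collected after the definition of $\zeta_d$, $\|\nabla^j\zeta_d\|_{L^\infty}\lesssim\delta^{-j}$. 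Combining these gives $\|\partial^\alpha F_i\|_{L^\infty}\lesssim\delta^{-m}+\delta^{-(m-1)}\lesssim\delta^{-m}$. Since each component of $\bs{w}$ is a single partial derivative of $F_1$ or $F_2$, we obtain $\|\nabla^i\bs{w}\|_{L^\infty}\lesssim\|\nabla^{i+1}F_1\|_{L^\infty}+\|\nabla^{i+1}F_2\|_{L^\infty}\lesssim\delta^{-(i+1)}$, which is (iv); taking $i=0$ yields (iii). No step here looks delicate: the only mild bookkeeping is the odd-dimensional case, where the two correction terms built from $F_2$ must be checked to preserve both divergence-freeness and the value $\bs{q}$ on the inner cube, but both follow by the same linearity-plus-cutoff argument.
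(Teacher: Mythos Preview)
Your proposal is correct and follows exactly the approach implicit in the paper: the paper gives the explicit formulas for $F_1$, $F_2$, and $\bs{w}$ and then simply states that the properties follow from the definition, so your verification via the antisymmetric-pair structure for divergence-freeness, the identity $\zeta_d\equiv 1$ on $\overline{Q}(\bs{0},1)$ for property (ii), and the Leibniz rule combined with $\|\nabla^j\zeta_d\|_{L^\infty}\lesssim\delta^{-j}$ for (iii)--(iv) is precisely the intended argument spelled out in detail.
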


 \begin{figure}[h]
\centering
 \includegraphics[scale = 0.45]{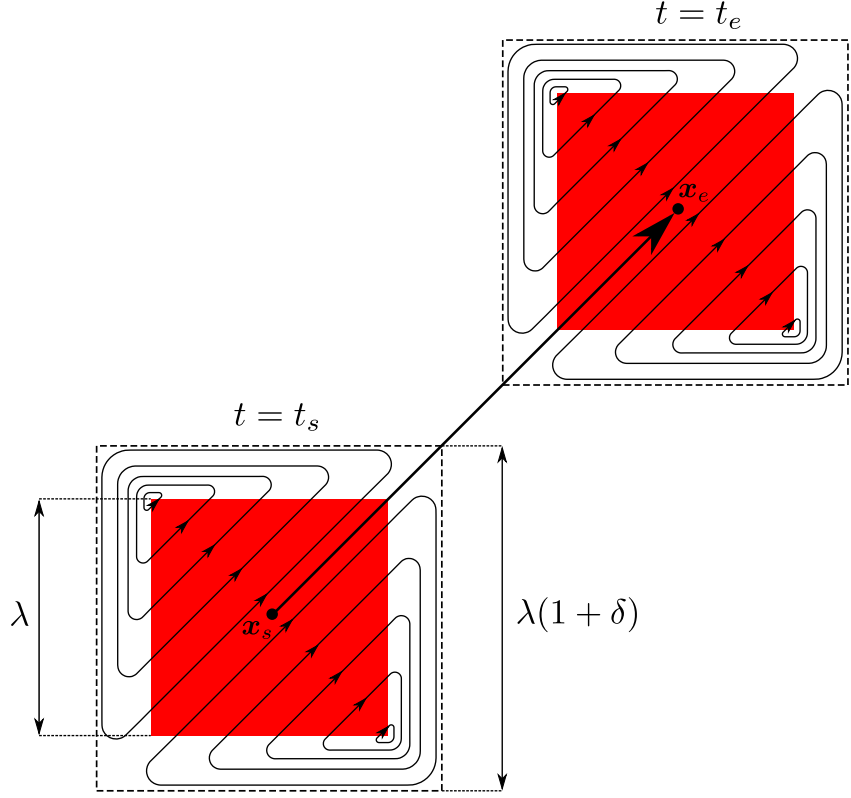}
 \caption{An illustration of the blob flow $\wt{\bs{v}}$ in two dimensions. The vector field $\wt{\bs{v}}$ translates a cube of length $\lambda$ centered at $\bs{x}_s$ at time $t = t_s$ to a cube centered at $\bs{x}_e$ at $t = t_e$. The cube is shown in red color. As the cube moves, the vector field $\wt{\bs{v}}$ inside the red cube is always uniform, and the support of the vector field lies in a slightly bigger cube of length $\lambda(1 + \delta)$ (shown in dashed).}
 \label{fig: blob}
\end{figure}
\subsection{A moving blob}
Using the vector field $\bs{w}$, our goal now is to design an unsteady vector field $\wt{\bs{v}}$ which translates a cube centered around a starting point $\bs{x}_s$ to an endpoint $\bs{x}_e$. We first define $\eta : \mathbb{R} \to \mathbb{R}$ as  
\begin{eqnarray}
    \eta(x) \coloneqq \int_{-\infty}^{x-\frac{1}{2}} \varphi(x^\prime) \, {\rm d} x^\prime,
\end{eqnarray}
where $\varphi$ is given by (\ref{def: varphi}). It is clear that $\eta(x) = 0$ if $x \leq 0$ and $\eta(x) = 1$ if $x \geq 1$.
Given two points $\bs{x}_s, \bs{x}_e \in \mathbb{R}^d$ and time $t_s < t_e$, we define $\bs{x}_p: \mathbb{R} \to \mathbb{R}^d$ as
\begin{eqnarray}
    \bs{x}_p(t) \coloneqq \bs{x}_s \left[1 - \eta\left(\frac{t - t_s}{t_e - t_s}\right)\right]  + \bs{x}_e \eta\left(\frac{t - t_s}{t_e - t_s}\right). 
\end{eqnarray}
Finally, given $0 < \delta < 1$ and $\lambda > 0$, we define a vector field $\wt{\bs{v}}(\cdot \, ; \bs{x}_s, \bs{x}_e, t_s, t_e, \lambda, \delta) : \mathbb{R} \times \mathbb{R}^d \to \mathbb{R}^d$ as
\begin{eqnarray}
    \wt{\bs{v}}(t, \bs{x}) \coloneqq |\bs{x}_p^\prime(t)| \bs{w}\left(\frac{\bs{x} - \bs{x}_p(t)}{\lambda} ; \frac{\bs{x}_e - \bs{x}_s}{|\bs{x}_e - \bs{x}_s|}, \delta \right).
    \label{eqn: v tilde}
\end{eqnarray}
Here, $\bs{x}_p(t)$ signifies the trajectory of the center of the cube that $\wt{\bs{v}}$ would translate. We collect all the important properties of the vector field $\wt{\bs{v}}$ in the following proposition.
\begin{proposition}
\label{Prop of v tilde}
Given a few parameters $\bs{q} \in \mathbb{S}^{d-1}$, $\delta \in (0, 1)$, $\bs{x}_s, \bs{x}_e \in \mathbb{R}^d$ and  $0 \leq t_s < t_e$, then the vector field $\wt{\bs{v}}(\cdot \, ; \bs{x}_s, \bs{x}_e, t_s, t_e, \lambda, \delta) : \mathbb{R} \times \mathbb{R}^d \to \mathbb{R}^d$ as defined in (\ref{eqn: v tilde}) has the following properties. \\[5pt]
(i) $\wt{\bs{v}} \in C_c^\infty(\mathbb{R} \times \mathbb{R}^d, \mathbb{R}^d)$, \\[5pt]
(ii) $\wt{\bs{v}}$ is divergence-free, \\[5pt]
(iii) $\supp_t \wt{\bs{v}} \subseteq [t_s, t_e]$, \\[5pt]
(iv) $\supp \wt{\bs{v}}(t, \cdot) \subseteq \ol{Q}(\bs{x}_p(t), \lambda(1 + \delta))$, \\[5pt]
(v) If $\bs{x} \in \ol{Q}(\bs{x}_p(t), \lambda)$ then $\wt{\bs{v}}(t, \bs{x}) = \frac{(\bs{x}_e - \bs{x}_s)}{t_e - t_s} \eta^\prime\left(\frac{t - t_s}{t_e - t_s}\right)$, \\[5pt]
(vi) $\norm{\wt{\bs{v}}}_{L^\infty_{t, \bs{x}}} \lesssim \frac{|\bs{x}_e - \bs{x}_s|}{\delta (t_e - t_s)}$, \\[5pt]
(vii) $ \norm{\nabla^i \wt{\bs{v}}}_{L^\infty_{t, \bs{x}}} \lesssim \frac{1}{\lambda^i \delta^{i+1}} \frac{|\bs{x}_e - \bs{x}_s|}{t_e - t_s} \quad \forall i \in \mathbb{N}$, \\[5pt]
(viii) $\norm{\partial_t \wt{\bs{v}}}_{L^\infty_{t, \bs{x}}} \lesssim \frac{|\bs{x}_e - \bs{x}_s|}{\delta (t_e - t_s)^2} + \frac{1}{\lambda \delta^2}\frac{|\bs{x}_e - \bs{x}_s|^2}{(t_e - t_s)^2}$, \\[5pt]
(ix) If $\bs{x} \in \ol{Q}(\bs{x}_s, \lambda)$ then $\gamma^{\wt{\bs{v}}}_{\bs{x}}(t_e) = \bs{x} + \bs{x}_e - \bs{x}_s$.
\end{proposition}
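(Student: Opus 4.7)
The proof will proceed by direct verification of each property, exploiting the fact that $\wt{\bs{v}}$ is essentially a Galilean translate (by $\bs{x}_p(t)$) and rescaling (by $\lambda$) of the stationary blob $\bs{w}$ from Lemma \ref{Prop of w}, modulated in amplitude by the speed $|\bs{x}_p'(t)|$. The plan is to first record two elementary facts used throughout: that $\eta'(\cdot) = \varphi(\cdot - 1/2)$ is a non-negative $C^\infty_c$ function supported in $[0,1]$, so that $\bs{x}_p'(t) = \frac{\bs{x}_e - \bs{x}_s}{t_e - t_s}\, \eta'\!\left(\frac{t - t_s}{t_e - t_s}\right)$ vanishes outside $[t_s, t_e]$; and that because $\eta'\ge 0$ the direction of $\bs{x}_p'(t)$ is always $\bs{q} := (\bs{x}_e - \bs{x}_s)/|\bs{x}_e-\bs{x}_s|$, so $|\bs{x}_p'(t)|\bs{q} = \bs{x}_p'(t)$.

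With these in hand, properties (i)--(iv) follow directly: smoothness from smoothness of $\bs{x}_p$ and $\bs{w}$; the divergence-free condition from $(\nabla_{\bs{x}} \cdot \wt{\bs{v}})(t,\bs{x}) = \lambda^{-1}|\bs{x}_p'(t)|(\nabla \cdot \bs{w})\!\left(\frac{\bs{x} - \bs{x}_p(t)}{\lambda}\right) = 0$; temporal support from the support of $\eta'$; and spatial support from property (i) of Lemma \ref{Prop of w}, which gives $\supp \bs{w}((\bs{x}-\bs{x}_p(t))/\lambda) \subseteq \ol{Q}(\bs{x}_p(t), \lambda(1+\delta))$. Property (v) uses property (ii) of Lemma \ref{Prop of w}: on $\ol{Q}(\bs{x}_p(t),\lambda)$ the rescaled argument lies in $\ol{Q}(\bs{0},1)$, so $\bs{w}(\cdots) = \bs{q}$ and $\wt{\bs{v}}(t,\bs{x}) = |\bs{x}_p'(t)|\bs{q} = \bs{x}_p'(t)$, which is exactly the claimed formula. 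The bounds (vi)--(vii) are immediate from the chain rule, $\nabla^i \wt{\bs{v}}(t,\bs{x}) = \lambda^{-i}|\bs{x}_p'(t)| (\nabla^i \bs{w})(\cdots)$, combined with parts (iii)--(iv) of Lemma \ref{Prop of w} and $|\bs{x}_p'(t)| \lesssim |\bs{x}_e - \bs{x}_s|/(t_e - t_s)$.

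For (viii), the approach is to expand $\partial_t \wt{\bs{v}}$ using the product rule into two pieces: one from differentiating the prefactor $|\bs{x}_p'(t)|$, which produces $|\bs{x}_p''(t)| \lesssim \frac{|\bs{x}_e-\bs{x}_s|}{(t_e-t_s)^2}$ times $\|\bs{w}\|_{L^\infty} \lesssim 1/\delta$, yielding the first term; and one from differentiating the argument of $\bs{w}$, which pulls down a factor $-\bs{x}_p'(t)/\lambda$ against $\nabla\bs{w}$, producing $\frac{|\bs{x}_p'|^2}{\lambda}\|\nabla\bs{w}\|_{L^\infty}\lesssim \frac{|\bs{x}_e-\bs{x}_s|^2}{\lambda \delta^2 (t_e-t_s)^2}$. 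The main bookkeeping obstacle here is confirming that $|\bs{x}_p''(t)|$ is indeed controlled by $|\bs{x}_e - \bs{x}_s|/(t_e-t_s)^2$ with constant depending only on $\|\eta''\|_{L^\infty}$, which is fixed.

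The most interesting property is (ix). The idea is to exhibit the trajectory explicitly by guessing $\bs{x}(t) := \bs{x} - \bs{x}_s + \bs{x}_p(t)$, which clearly satisfies $\bs{x}(t_s) = \bs{x}$ and $\bs{x}(t_e) = \bs{x} + \bs{x}_e - \bs{x}_s$ since $\eta(1)=1$. The verification that this is a solution of the ODE rides on the fact that $\bs{x}(t) - \bs{x}_p(t) = \bs{x} - \bs{x}_s$ is constant in $t$ and lies in $\ol{Q}(\bs{0},\lambda)$ by assumption; therefore $\bs{x}(t) \in \ol{Q}(\bs{x}_p(t),\lambda)$ for all $t$, and property (v) already proved gives $\wt{\bs{v}}(t, \bs{x}(t)) = \bs{x}_p'(t) = \dot{\bs{x}}(t)$. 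Uniqueness of trajectories (from $\wt{\bs{v}} \in C^\infty_c$) then identifies this with $\gamma^{\wt{\bs{v}}}_{\bs{x}}$. The only subtle point to watch is that the whole trajectory $\bs{x}(t)$ remains in the ``plateau'' region where $\bs{w}$ equals $\bs{q}$, which is why property (v) can be invoked uniformly in time; this is exactly the geometric content of the moving-blob design.
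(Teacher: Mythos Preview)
Your proposal is correct and follows essentially the same approach as the paper: properties (i)--(v) are read off from the definition and Lemma~\ref{Prop of w}, (vi)--(viii) come from the same chain-rule and product-rule computations, and for (ix) your explicit trajectory $\bs{x}(t)=\bs{x}-\bs{x}_s+\bs{x}_p(t)$ coincides with the paper's $\bs{x}+(\bs{x}_e-\bs{x}_s)\,\eta\!\left(\frac{t-t_s}{t_e-t_s}\right)$ once one expands $\bs{x}_p$. Your explicit remark that $\eta'\ge 0$ forces $|\bs{x}_p'(t)|\,\bs{q}=\bs{x}_p'(t)$ (needed for (v)) and your appeal to uniqueness of trajectories in (ix) are small clarifications the paper leaves implicit.
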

\begin{proof}[Proof of Proposition \ref{Prop of v tilde}]
The proof of all these properties is straightforward. Properties (i) to (v) is straight from the definition and Lemma \ref{Prop of w}.We perform a few simple computations to show properties (vi) to (viii). For the property (vi), we have
\begin{align}
& \wt{\bs{v}} = \frac{|\bs{x}_e - \bs{x}_s|}{t_e - t_s} \eta^\prime\left(\frac{t - t_s}{t_e - t_s}\right) \bs{w}\left(\frac{\bs{x} - \bs{x}_p(t)}{\lambda} ; \frac{\bs{x}_e - \bs{x}_s}{|\bs{x}_e - \bs{x}_s|}, \delta \right) \nonumber \\
\implies &  \norm{\wt{\bs{v}}}_{L^\infty_{t, \bs{x}}} \leq \frac{|\bs{x}_e - \bs{x}_s|}{t_e - t_s} \sup_{t} |\eta^\prime| \norm{\bs{w}}_{L^\infty} \lesssim \frac{|\bs{x}_e - \bs{x}_s|}{\delta (t_e - t_s)}. \nonumber
\end{align}
The property (vii) can be shown to hold as follows.
\begin{align}
& \nabla^i \wt{\bs{v}} = \frac{1}{\lambda^i} \frac{|\bs{x}_e - \bs{x}_s|}{t_e - t_s} \eta^\prime\left(\frac{t - t_s}{t_e - t_s}\right) (\nabla^i \bs{w})\left(\frac{\bs{x} - \bs{x}_p(t)}{\lambda} ; \frac{\bs{x}_e - \bs{x}_s}{|\bs{x}_e - \bs{x}_s|}, \delta \right) \nonumber \\
\implies &  \norm{\nabla^i \wt{\bs{v}}}_{L^\infty_{t, \bs{x}}} \leq \frac{1}{\lambda^i}\frac{|\bs{x}_e - \bs{x}_s|}{t_e - t_s} \sup_{t} |\eta^\prime| \norm{\nabla^i \bs{w}}_{L^\infty} \lesssim \frac{1}{\lambda^i \delta^{i+1}}\frac{|\bs{x}_e - \bs{x}_s|}{t_e - t_s}. \nonumber
\end{align}
The property (viii) is also proved through a simple computation.
\begin{align}
\partial_t \wt{\bs{v}} = & \frac{|\bs{x}_e - \bs{x}_s|}{(t_e - t_s)^2} \eta^{\prime \prime} \left(\frac{t - t_s}{t_e - t_s}\right) \bs{w}\left(\frac{\bs{x} - \bs{x}_p(t)}{\lambda} ; \frac{\bs{x}_e - \bs{x}_s}{|\bs{x}_e - \bs{x}_s|}, \delta \right) \nonumber \\
& + \frac{|\bs{x}_e - \bs{x}_s|}{t_e - t_s} \eta^\prime\left(\frac{t - t_s}{t_e - t_s}\right) \left[(\nabla \bs{w})\left(\frac{\bs{x} - \bs{x}_p(t)}{\lambda} ; \frac{\bs{x}_e - \bs{x}_s}{|\bs{x}_e - \bs{x}_s|}, \delta \right) \cdot \left(\frac{(\bs{x_s} - \bs{x}_e)}{\lambda (t_e - t_s)} \eta^\prime\left(\frac{t - t_s}{t_e - t_s}\right) \right)\right] \nonumber \\
\implies \norm{\partial_t \wt{\bs{v}}}_{L^\infty_{t, \bs{x}}} & \lesssim \frac{|\bs{x}_e - \bs{x}_s|}{\delta(t_e - t_s)^2} + \frac{1}{\lambda \delta^2}\frac{|\bs{x}_e - \bs{x}_s|^2}{(t_e - t_s)^2} \nonumber
\end{align}
Finally, for the property (ix), we claim that 
\begin{align}
  \gamma^{\wt{\bs{v}}}_{\bs{x}}(t) \coloneqq \bs{x} + (\bs{x}_e - \bs{x}_s) \eta \left(\frac{t - t_s}{t_e - t_s}\right) \qquad \text{for} \quad   \bs{x} \in \ol{Q}(\bs{x}_s, \lambda) \nonumber
\end{align}
is a trajectory starting from $\bs{x}$. It is easy to see that in the above definition $\gamma^{\wt{\bs{v}}}_{\bs{x}}(t) \in \ol{Q}(\bs{x}_p(t), \lambda)$, therefore, from the property (v), we have
\begin{align}
    \wt{\bs{v}}(t, \gamma^{\wt{\bs{v}}}_{\bs{x}}(t)) = \frac{(\bs{x}_e - \bs{x}_s)}{t_e - t_s} \eta^\prime\left(\frac{t - t_s}{t_e - t_s}\right). \nonumber
\end{align}
Next, we simply verify that 
\begin{eqnarray}
    \frac{d \gamma^{\wt{\bs{v}}}_{\bs{x}}(t)}{d t} = \wt{\bs{v}}(t, \gamma^{\wt{\bs{v}}}_{\bs{x}}(t)) \qquad \text{and} \qquad \gamma^{\wt{\bs{v}}}_{\bs{x}}(0) = \bs{x}. \nonumber
\end{eqnarray}
Finally, noting that $\gamma^{\wt{\bs{v}}}_{\bs{x}}(t_e) = \bs{x} + \bs{x}_e - \bs{x}_s$ completes the proof of the proposition.
\end{proof}

\subsection{Assembly of moving blobs: A proof of Proposition \ref{Prop of vi}}
\label{subsection: assembly of moving blobs}
\begin{proof}[Proof of Proposition \ref{Prop of vi}]
Given $\mathfrak{s} \in S_{i+1}$ for $i \in \mathbb{Z}_{\geq 0}$, let us first define $\ol{\bs{v}}_\mathfrak{s}:[0, \tau_\infty] \times \mathbb{T}^d \to \mathbb{R}^d$ as
\begin{eqnarray}
    \ol{\bs{v}}_{\mathfrak{s}}(t, \bs{x}) \coloneqq \wt{\bs{v}}\left(t, \bs{x}; \wt{P}^{i+1}_{\Phi}(\mathfrak{s}), P^{i+1}_{\Theta}(\mathfrak{s}), \frac{2 \tau_i + \tau_{i+1}}{3}, \frac{\tau_i + 2\tau_{i+1}}{3}, \ell^{i+1}_\Phi, \vartheta(\nu)\right),
\end{eqnarray}
where 
\begin{eqnarray}
    \vartheta(\nu) = \frac{2^\nu - 1}{8}, \nonumber
\end{eqnarray}
as defined in (\ref{def: varepsilon}).
In the above defintion, $\wt{\bs{v}}:[0, \tau_\infty] \times \mathbb{T}^d \to \mathbb{R}^d$ is the $\mathbb{T}^d$-periodized version of the $\wt{\bs{v}}$ from Proposition \ref{Prop of v tilde} restricted to time $0$ to $\tau_\infty$.
Next, we define $\bs{v}_i: [0, \tau_\infty] \times \mathbb{T}^d \to \mathbb{R}^d$ as
\begin{eqnarray}
    \bs{v}_i \coloneqq \sum_{\mathfrak{s} \in S_{i+1}} \ol{\bs{v}}_{\mathfrak{s}}.
    \label{def: definition vi}
\end{eqnarray}
We claim that $\bs{v}_i$ satisfies all the properties as specified in Proposition \ref{Prop of vi}. Noting properties (i) and (ii) from Proposition \ref{Prop of v tilde}, it is clear that $\bs{v}_i \in C^\infty([0, \tau_\infty] \times \mathbb{T}^d, \mathbb{R}^d)$ and that $\bs{v}_i$ is divergence-free. Further, using property (iii) from Proposition \ref{Prop of v tilde}, we see that
\begin{eqnarray}
    \supp_t \bs{v}_i \subseteq \left[\frac{2 \tau_i + \tau_{i+1}}{3}, \frac{\tau_i + 2 \tau_{i+1}}{3}\right] \Subset (\tau_i, \tau_{i+1}). \nonumber
\end{eqnarray}
Before proving the rest of the properties in Proposition \ref{Prop of vi}, we first notice that in the definition (\ref{def: definition vi}), $\supp \ol{\bs{v}}_{\mathfrak{s}}(t, \cdot)$ are disjoint. Using the property (iv) in Proposition \ref{Prop of v tilde}, for any $\mathfrak{s} \in S_{i+1}$, we note that
\begin{eqnarray}
   \supp \ol{\bs{v}}_{\mathfrak{s}}(t, \cdot) \subseteq \ol{Q}(\bs{x}_{\mathfrak{s}}(t), \ell^{i+1}_\Phi (1 + \vartheta)), \nonumber 
\end{eqnarray}
where 
\begin{eqnarray}
    \bs{x}_{\mathfrak{s}}(t) = \wt{P}^{i+1}_{\Phi}(\mathfrak{s}) \left[1 - \eta\left(\frac{3t - 2 \tau_i - \tau_{i+1}}{\tau_{i+1} - \tau_i}\right)\right]  +  P^{i+1}_{\Theta}(\mathfrak{s})\eta\left(\frac{3t - 2 \tau_i - \tau_{i+1}}{\tau_{i+1} - \tau_i}\right). \nonumber
\end{eqnarray}
Next, using Lemma (\ref{lemma: an important subset}), we have that 
\begin{eqnarray}
    \supp \ol{\bs{v}}_{\mathfrak{s}}(t, \cdot) \subseteq \ol{Q}(\bs{x}_{\mathfrak{s}}(t), \ell^{i+1}_\Phi (1 + \vartheta)) \Subset Q(P^{i+1}_\Theta(\mathfrak{s}), \ell^{i+1}_\Theta). \nonumber
\end{eqnarray}
From Lemma \ref{lemma: disjoint open cubes}, the open cubes $Q(P^{i+1}_\Theta(\mathfrak{s}), \ell^{i+1}_\Theta)$ are disjoint, which in turn implies that $\supp \ol{\bs{v}}_{\mathfrak{s}}(t, \cdot)$ are disjoint, an important detail we  frequently use in the rest of the proof. Using property (vi) from Proposition \ref{Prop of v tilde}, we obtain
\begin{align}
    \norm{\bs{v}_i}_{L^\infty_{t, \bs{x}}} \leq \max_{\mathfrak{s} \in S_{i+1}} \norm{\ol{\bs{v}}_{\mathfrak{s}}}_{L^\infty_{t, \bs{x}}} \lesssim \frac{1}{(2^\nu - 1)} \frac{\ell^{i+1}_\Theta}{(\tau_{i+1} - \tau_{i})} \leq \frac{1}{(2^{\nu}-1)} \frac{1}{2^{i \beta}}. \nonumber
\end{align}
Using property (vii) from Propostion \ref{Prop of v tilde}, we have
\begin{align}
    \norm{\nabla \bs{v}_i}_{L^\infty_{t, \bs{x}}} \leq \max_{\mathfrak{s} \in S_{i+1}} \norm{\nabla \ol{\bs{v}}_{\mathfrak{s}}}_{L^\infty_{t, \bs{x}}} \lesssim \frac{1}{\ell^{i+1}_\Phi (2^\nu - 1)^2} \frac{\ell^{i+1}_\Theta}{(\tau_{i+1} - \tau_{i})} \leq \frac{2^{(1 + \nu - \beta) i}}{(2^{\nu}-1)^2}. \nonumber
\end{align}
The Sobolev norm of the vector field $\bs{v}_i$ can be obtained after using properties (vi) and (vii) as
\begin{align}
    \norm{ \bs{v}_i(t, \cdot)}_{W^{1, p}} & \leq \left(\sum_{\mathfrak{s} \in S_{i+1}} \left(\norm{\ol{\bs{v}}_{\mathfrak{s}}}_{L^\infty_{t, \bs{x}}}^p + \norm{\nabla \ol{\bs{v}}_{\mathfrak{s}}}_{L^\infty_{t, \bs{x}}}^p\right)\mathscr{L}^d(\supp \ol{\bs{v}}_{\mathfrak{s}}(t, \cdot))\right)^{\frac{1}{p}} \nonumber \\
    & \lesssim \left(\sum_{\mathfrak{s} \in S_{i+1}} \left(\frac{2^{(1 + \nu - \beta)pi}}{(2^{\nu} - 1)^{2p}} \; \frac{1}{2^{(1+\nu)d(i+1)}}\right)\right)^{\frac{1}{p}} \nonumber \\
    & \lesssim \frac{1}{(2^\nu - 1)^2} \times 2^{\frac{[(1 + \nu - \beta)p - d \nu]}{p}i}. \nonumber
\end{align}
An upper bound on the time derivative of the vector field $\bs{v}_i$ can be obtained after using (viii) as
\begin{align}
    \norm{\partial_t \bs{v}_i}_{L^\infty_{t, \bs{x}}} \leq \max_{\mathfrak{s} \in S_{i+1}} \norm{\partial_t \ol{\bs{v}}_{\mathfrak{s}}}_{L^\infty_{t, \bs{x}}} & \lesssim \frac{1}{(2^\nu - 1)} \frac{\ell^{i+1}_\Theta}{(\tau_{i+1} - \tau_{i})^2} + \frac{1}{\ell^{i+1}_{\Phi}(2^\nu - 1)^2} \frac{(\ell^{i+1}_\Theta)^2}{(\tau_{i+1} - \tau_{i})^2} \nonumber \\
    & \lesssim \frac{1}{(2^\nu - 1)^2}  \frac{1}{2^{[2 \beta - \nu - 1]i}}. \nonumber
\end{align}
Finally, using the fact that at any point in time $ \supp \ol{\bs{v}}_{\mathfrak{s}}(t, \cdot) \Subset Q(P^{i+1}_\Theta(\mathfrak{s}), \ell^{i+1}_\Theta)$ and the property (ix) in Proposition \ref{Prop of v tilde}, we conclude that if for some $\mathfrak{s} \in S_{i+1}$, $\bs{x} \in \ol{Q}(\wt{P}^{i+1}_\Phi(\mathfrak{s}), \ell^{i+1}_\Phi)$, then $\gamma^{\bs{v}_i}_{\bs{x}}(\tau_{i+1}) = \bs{x} - \wt{P}^{i+1}_\Phi(\mathfrak{s}) + P^{i+1}_\Theta(\mathfrak{s}).$
\end{proof}

\bibliographystyle{halpha-abbrv}
\bibliography{references.bib}

\begin{thebibliography}{BDLISJ15}
\expandafter\ifx\csname url\endcsname\relax
  \def\url#1{\texttt{#1}}\fi
\expandafter\ifx\csname doi\endcsname\relax
  \def\doi#1{\burlalt{doi:#1}{http://dx.doi.org/#1}}\fi
\expandafter\ifx\csname urlprefix\endcsname\relax\def\urlprefix{URL }\fi
\expandafter\ifx\csname href\endcsname\relax
  \def\href#1#2{#2}\fi
\expandafter\ifx\csname burlalt\endcsname\relax
  \def\burlalt#1#2{\href{#2}{#1}}\fi

\bibitem[ACM19]{AlbertiCrippaMazzucato19}
G.~Alberti, G.~Crippa, and A.~L. Mazzucato.
\newblock Exponential self-similar mixing by incompressible flows.
\newblock {\em J. Amer. Math. Soc.}, 32(2):445--490, 2019.
\newblock \doi{10.1090/jams/913}.

\bibitem[Alb12]{alberti2012generalized}
G.~Alberti.
\newblock {Generalized N-property and Sard theorem for Sobolev maps}.
\newblock {\em Rendiconti Lincei}, 23(4):477--491, 2012.
\newblock \doi{10.4171/RLM/641}.

\bibitem[Amb04]{Ambrosio04}
L.~Ambrosio.
\newblock Transport equation and {C}auchy problem for {$BV$} vector fields.
\newblock {\em Invent. Math.}, 158(2):227--260, 2004.
\newblock \doi{10.1007/s00222-004-0367-2}.

\bibitem[AV23]{armstrong2023anomalous}
S.~Armstrong and V.~Vicol.
\newblock {Anomalous diffusion by fractal homogenization}.
\newblock {\em arXiv preprint arXiv:2305.05048}, 2023.

\bibitem[BCDL21]{BrueColomboDeLellis21}
E.~Bru\'{e}, M.~Colombo, and C.~De~Lellis.
\newblock Positive solutions of transport equations and classical nonuniqueness of characteristic curves.
\newblock {\em Arch. Ration. Mech. Anal.}, 240(2):1055--1090, 2021.
\newblock \doi{10.1007/s00205-021-01628-5}.

\bibitem[BCK]{bruecolombokumar24}
E.~Bru\`e, M.~Colombo, and A.~Kumar.
\newblock Nonuniqueness of solutions of the continuity and transport equation for sobolev vector fields: matching with the uniqueness boundary.
\newblock {\em (in preperation)}.

\bibitem[BCV21]{buckmaster2021wild}
T.~Buckmaster, M.~Colombo, and V.~Vicol.
\newblock {Wild solutions of the Navier--Stokes equations whose singular sets in time have Hausdorff dimension strictly less than 1}.
\newblock {\em Journal of the European Mathematical Society}, 24(9):3333--3378, 2021.
\newblock \doi{10.4171/JEMS/1162}.

\bibitem[BD23]{brue2023anomalous}
E.~Bru\`e and C.~{De Lellis}.
\newblock Anomalous {D}issipation for the {F}orced 3{D} {N}avier-{S}tokes {E}quations.
\newblock {\em Comm. Math. Phys.}, 400(3):1507--1533, 2023.
\newblock \doi{10.1007/s00220-022-04626-0}.

\bibitem[BDLISJ15]{buckmaster2015anomalous}
T.~Buckmaster, C.~De~Lellis, P.~Isett, and L.~Sz{\'e}kelyhidi~Jr.
\newblock {Anomalous dissipation for 1/5-H{\"o}lder Euler flows}.
\newblock {\em Annals of Mathematics}, pages 127--172, 2015.
\newblock \doi{10.4007/annals.2015.182.1.3}.

\bibitem[BDLSV19]{buckmaster2019onsager}
T.~Buckmaster, C.~{De Lellis}, J.~L.~Sz\'{e}kelyhidi, and V.~Vicol.
\newblock Onsager's conjecture for admissible weak solutions.
\newblock {\em Comm. Pure Appl. Math.}, 72(2):229--274, 2019.
\newblock \doi{10.1002/cpa.21781}.

\bibitem[Buc15]{buckmaster2015onsager}
T.~Buckmaster.
\newblock {Onsager’s conjecture almost everywhere in time}.
\newblock {\em Communications in Mathematical Physics}, 333:1175--1198, 2015.
\newblock \doi{10.1007/s00220-014-2262-z}.

\bibitem[BV19]{buckmaster2019nonuniqueness}
T.~Buckmaster and V.~Vicol.
\newblock {Nonuniqueness of weak solutions to the Navier-Stokes equation}.
\newblock {\em Annals of Mathematics}, 189(1):101--144, 2019.
\newblock \doi{10.4007/annals.2019.189.1.3}.

\bibitem[CC21]{CaravennaCrippa18}
L.~Caravenna and G.~Crippa.
\newblock A directional {L}ipschitz extension lemma, with applications to uniqueness and {L}agrangianity for the continuity equation.
\newblock {\em Comm. Partial Differential Equations}, 46(8):1488--1520, 2021.
\newblock \doi{10.1080/03605302.2021.1883650}.

\bibitem[CCS23]{colombo2023anomalous}
M.~Colombo, G.~Crippa, and M.~Sorella.
\newblock {Anomalous dissipation and lack of selection in the Obukhov--Corrsin theory of scalar turbulence}.
\newblock {\em Annals of PDE}, 9(2):1--48, 2023.
\newblock \doi{10.1007/s40818-023-00162-9}.

\bibitem[DEIJ22]{drivas22anomdissp}
T.~D. Drivas, T.~M. Elgindi, G.~Iyer, and I.-J. Jeong.
\newblock Anomalous dissipation in passive scalar transport.
\newblock {\em Arch. Ration. Mech. Anal.}, 243(3):1151--1180, 2022.
\newblock \doi{10.1007/s00205-021-01736-2}.

\bibitem[Dep03]{depauw2003non}
N.~Depauw.
\newblock {Non unicit{\'e} des solutions born{\'e}es pour un champ de vecteurs BV en dehors d'un hyperplan}.
\newblock {\em Comptes rendus. Math{\'e}matique}, 337(4):249--252, 2003.
\newblock \doi{10.1016/S1631-073X(03)00330-3}.

\bibitem[DL89]{DiPernaLions}
R.~J. DiPerna and P.-L. Lions.
\newblock Ordinary differential equations, transport theory and {S}obolev spaces.
\newblock {\em Invent. Math.}, 98(3):511--547, 1989.
\newblock \doi{10.1007/BF01393835}.

\bibitem[DL23]{delellisAMSlecture2023}
C.~De~Lellis.
\newblock {Camillo De Lellis,``Flows of Nonsmooth Vector Fields," AMS Colloquium Lecture II}.
\newblock \url{https://www.youtube.com/watch?v=5mEu9p-sb-M}, 2023.

\bibitem[DLG22]{de2022smoothing}
C.~De~Lellis and V.~Giri.
\newblock {Smoothing does not give a selection principle for transport equations with bounded autonomous fields}.
\newblock {\em Annales math{\'e}matiques du Qu{\'e}bec}, 46(1):27--39, 2022.
\newblock \doi{10.1007/s40316-021-00160-y}.

\bibitem[DLS13]{de2013dissipative}
C.~De~Lellis and L.~Sz{\'e}kelyhidi.
\newblock {Dissipative continuous Euler flows}.
\newblock {\em Inventiones Mathematicae}, 193:377--407, 2013.
\newblock \doi{10.1007/s00222-012-0429-9}.

\bibitem[DLSJ09]{de2009euler}
C.~De~Lellis and L.~Sz{\'e}kelyhidi~Jr.
\newblock {The Euler equations as a differential inclusion}.
\newblock {\em Annals of Mathematics}, pages 1417--1436, 2009.
\newblock \doi{10.4007/annals.2009.170.1417}.

\bibitem[DSJ17]{daneri2017non}
S.~Daneri and L.~Sz{\'e}kelyhidi~Jr.
\newblock {Non-uniqueness and h-principle for H{\"o}lder-continuous weak solutions of the Euler equations}.
\newblock {\em Archive for Rational Mechanics and Analysis}, 224(2):471--514, 2017.
\newblock \doi{10.1007/s00205-017-1081-8}.

\bibitem[EL23]{elgindi2023norm}
T.~M. Elgindi and K.~Liss.
\newblock {Norm Growth, Non-uniqueness, and Anomalous Dissipation in Passive Scalars}.
\newblock {\em arXiv preprint arXiv:2309.08576}, 2023.

\bibitem[EZ19]{ElgindiZlatosuniversalmixer}
T.~M. Elgindi and A.~Zlato\v{s}.
\newblock Universal mixers in all dimensions.
\newblock {\em Adv. Math.}, 356:106807, 33, 2019.
\newblock \doi{10.1016/j.aim.2019.106807}.

\bibitem[Fal14]{FalconerFractal}
K.~Falconer.
\newblock {\em Fractal geometry}.
\newblock John Wiley \& Sons, Ltd., Chichester, third edition, 2014.
\newblock Mathematical foundations and applications.

\bibitem[FPR21]{FeffermanPooleyRodrigo21}
C.~L. Fefferman, B.~C. Pooley, and J.~L. Rodrigo.
\newblock Non-conservation of dimension in divergence-free solutions of passive and active scalar systems.
\newblock {\em Arch. Ration. Mech. Anal.}, 242(3):1445--1478, 2021.
\newblock \doi{10.1007/s00205-021-01708-6}.

\bibitem[GS22]{GiriSorella21}
V.~Giri and M.~Sorella.
\newblock Non-uniqueness of integral curves for autonomous {H}amiltonian vector fields.
\newblock {\em Differential Integral Equations}, 35(7-8):411--436, 2022.

\bibitem[Ise18]{isett2018proof}
P.~Isett.
\newblock {A proof of Onsager's conjecture}.
\newblock {\em Annals of Mathematics}, 188(3):871--963, 2018.
\newblock \doi{10.4007/annals.2018.188.3.4}.

\bibitem[Kum22a]{anujkumarstonybrook22}
A.~Kumar.
\newblock {Optimizing scalar transport using branching pipe flows}.
\newblock \url{https://scgp.stonybrook.edu/video_portal/video.php?id=5307}, 2022.

\bibitem[Kum22b]{kumar2022three}
A.~Kumar.
\newblock Three dimensional branching pipe flows for optimal scalar transport between walls.
\newblock {\em arXiv preprint arXiv:2205.03367}, 2022.

\bibitem[Kum23]{kumar2023bulk}
A.~Kumar.
\newblock {\em Bulk properties and flow structures in turbulent flows}.
\newblock PhD thesis, University of California, Santa Cruz, 2023.
\newblock \urlprefix\url{https://escholarship.org/uc/item/47k237g4}.

\bibitem[MS18]{modena2018non}
S.~Modena and L.~Sz{\'e}kelyhidi.
\newblock {Non-uniqueness for the transport equation with Sobolev vector fields}.
\newblock {\em Annals of PDE}, 4:1--38, 2018.
\newblock \doi{10.1007/s40818-018-0056-x}.

\bibitem[MS20]{modena2020convex}
S.~Modena and G.~Sattig.
\newblock {Convex integration solutions to the transport equation with full dimensional concentration}.
\newblock In {\em Annales de l'Institut Henri Poincar{\'e} C, Analyse non lin{\'e}aire}, volume~37, pages 1075--1108. Elsevier, 2020.
\newblock \doi{10.1016/j.anihpc.2020.03.002}.

\bibitem[MSJ19]{modena2019non}
S.~Modena and L.~Sz{\'e}kelyhidi~Jr.
\newblock {Non-renormalized solutions to the continuity equation}.
\newblock {\em Calculus of Variations and Partial Differential Equations}, 58(6):208, 2019.
\newblock \doi{10.1007/s00526-019-1651-8}.

\bibitem[PS23]{pitcho2023almost}
J.~Pitcho and M.~Sorella.
\newblock Almost everywhere nonuniqueness of integral curves for divergence-free sobolev vector fields.
\newblock {\em SIAM Journal on Mathematical Analysis}, 55(5):4640--4663, 2023.
\newblock \doi{10.1137/22M1487187}.

\bibitem[SS05]{SteinShakarchiRealAnalysis}
E.~M. Stein and R.~Shakarchi.
\newblock {\em Real analysis}, volume~3 of {\em Princeton Lectures in Analysis}.
\newblock Princeton University Press, Princeton, NJ, 2005.
\newblock Measure theory, integration, and Hilbert spaces.

\bibitem[YZ17]{YaoZlatos17}
Y.~Yao and A.~Zlato\v{s}.
\newblock Mixing and un-mixing by incompressible flows.
\newblock {\em J. Eur. Math. Soc. (JEMS)}, 19(7):1911--1948, 2017.
\newblock \doi{10.4171/JEMS/709}.

\end{thebibliography}

\end{document}